\newcommand{\C}{{\mathbb C}}
\newcommand{\F}{{\mathbb F}}
\newcommand{\R}{{\mathbb R}}
\newcommand{\tr}{{\rm t}}
\newcommand{\trace}{\operatorname{trace}}
\theoremstyle{plain}
\newtheorem{theorem}{Theorem}
\newtheorem{proposition}[theorem]{Proposition}
\newtheorem{lemma}[theorem]{Lemma}
\newtheorem{corollary}[theorem]{Corollary}
\theoremstyle{definition}
\newtheorem{definition}[theorem]{Definition}
\newcommand{\OO}{\operatorname{O}}
\newcommand{\U}{\operatorname{U}}
\newcommand{\E}{\operatorname{E}}
\newcommand{\FF}{\operatorname{F}}
\newcommand{\real}{\operatorname{Re}}
\title{Bi-Lipschitz Quotient embedding for\\Euclidean Group actions on Data}
\author{Harm Derksen}
\address{Department of Mathematics\\Northeastern University\\360 Huntington Ave\\Boston, MA 02115}
\email{ha.derksen@northeastern.edu}
\thanks{The author was partially supported by NSF grant DMS 2147769.}
\begin{document}

\begin{abstract}
    If $G$ is a group acting on $\R^n$ by euclidean transformations, then (under mild assumptions) we 
    can define a metric on the orbit space $\R^n/G$. Recent papers have studied the question when there exists a bi-Lipschitz embedding $\R^n/G\hookrightarrow \R^m$ for some $m$. We will construct such an embedding with distortion $\sqrt{2}$ when $G$ is the orthogonal group or euclidean group 
    acting on $\ell$-tuples of vectors in $\R^n$. We also obtain similar results for the unitary group and complex euclidean group acting on $\ell$-tuples of vectors in $\C^n$. We work out the example of the euclidean group acting on triangles in the plane in more detail. The map that sends a triangle to the triple of sidelengths is not bi-Lipschitz. Experiments show that  our bi-Lipschitz construction is less sensitive to noise than the sidelengths map for the problem of matching a noisy sample triangle to a triangle in a database.

\end{abstract}
\maketitle

\section{Introduction}
\subsection{The orbit problem}
Suppose that $G$ is a group acting on $\R^n$. The orbit of a vector $a\in \R^n$ is $G\cdot a=\{g\cdot a\mid g\in G\}$.
The orbit space $\R^n/G$ is the set of all $G$-orbits in $\R^n$
and we define the quotient map $\pi:\R^n\to \R^n/G$ by
$\pi(a)=G\cdot a\in \R^n/G$. 
Given two vectors $a,b\in \R^n$ one can ask whether they lie in the  same $G$-orbit. We call this the {\em orbit problem}.

Many problems can be formulated as an orbit problem. Take for example the graph isomorphism problem. Two graphs ${\mathcal A}$ and ${\mathcal B}$
on $n$ vertices correspond to adjacency matrices $A$ and $B$ in $\R^{n\times n}\cong \R^{n^2}$ respectively.
The graphs ${\mathcal A}$ and ${\mathcal B}$ are isomorphic
if and only if $A$ and $B$ lie in the same $S_n$
orbit, where the symmetric group $S_n$ acts
 on $\R^{n\times n}$ by simultaneously permuting rows and columns.

 % Another example that we will discuss in more detail is the action of the euclidean group $\OO(n)$ on $\ell$-tuples of vectors. We can think of an $\ell$-tuple of vectors
 % as an $n\times \ell$ matrix $A=[a_1\ a_2\ \cdots\ a_\ell]\in \R^{n\times \ell}$
 % and $\OO(n)$ acts on $\R^{n\times \ell}$ by left multiplication.

\subsection{An approach using invariant theory}
One approach to the orbit problem  uses invariants.
\begin{definition}
A feature map $\phi:\R^n\to \R^m$ is called {\em invariant}
if $G\cdot a=G\cdot b$ implies $\phi(a)=\phi(b)$. It is called {\em orbit separating} if $\phi(a)=\phi(b)$ implies $G\cdot a=G\cdot b$.
\end{definition}
If $\phi:\R^n\to\R^m$ is an orbit separating invariant feature map,
then $G\cdot a=G\cdot b$ $\Leftrightarrow$ $\phi(a)=\phi(b)$. So we can easily test whether two vectors lie in the same orbit. If a feature map $\phi:\R^n\to \R^m$ is $G$-invariant,
then this map factors $\phi=\overline{\phi}\circ \pi$
where $\overline{\phi}(G\cdot a)=\phi(a)$ for $a\in \R^n$.
Now $\phi$ is orbit separating if and only of $\overline{\phi}$ is injective.

We can construct an orbit separating invariant feature map using invariant theory. In invariant theory one studies polynomial functions $\R^n\to \R$ that are invariant. Let $\R[x_1,x_2,\dots,x_n]$ be the
polynomial ring, where $x_1,x_2,\dots,x_n$ are the coordinate functions on $\R^n$. Any element $f(x)=f(x_1,x_2,\dots,x_n)\in\R[x_1,x_2,\dots,x_n]$
is a polynomial function $\R^n\to \R$.
 The invariant ring $\R[x_1,x_2,\dots,x_n]^G$
consists of all {\em invariant} polynomials in $\R[x_1,x_2,\dots,x_n]$. 
If $G$ is a closed subgroup of the orthogonal group $\OO(n)$, then $G$ is compact. By Hilbert's Finiteness Theorem (see~\cites{Hilbert1890,Hilbert1893}) there exist finitely many polynomials $f_1(x),f_2(x),\dots,f_m(x)$ that generate
the invariant ring $\R[x_1,x_2,\dots,x_n]^G$. This means
that every $G$-invariant is of the form $p(f_1(x),f_2(x),\dots,f_m(x))$ where $p(y_1,y_2,\dots,y_m)\in \R[y_1,y_2,\dots,y_m]$ is some polynomial in $m$ variables. We now can define a feature map $\phi:\R^n\to \R^m$ by $\phi(x)=(f_1(x),f_2(x),\dots,f_m(x))$. 
The map $\phi$ is invariant,  because $G$ is invariant.
The map $\phi$ is also orbit separating (see~\cite[Theorem 3.12]{Schwarz2004}).

% If we go back to the example of $\OO(n)$ acting on $\R^{n\times \ell}$. The first fundamental theorem of invariant theory for $\OO(n)$ states that the invariant
% ring is generated by the polynomials $f_{i,j}:[a_1\ a_2\ \cdots\ a_\ell]\mapsto \langle a_i,a_j\rangle$,
% where $\langle\cdot,\cdot\rangle$ is the inner product on $\R^n$. If we set $\phi=(f_{i,j})_{i,j=1}^{\ell}$
% then $\phi:\R^n\to \R^{\ell\times \ell}\cong \R^{\ell^2}$
% is given by $\phi(A)=A^\tr A$.
\subsection{Bi-Lipschitz feature maps}
For applications, rather than testing whether two vectors $a,b\in \R^n$ we may want to see how close the orbits $G\cdot a$ and $G\cdot b$ are to each other. On $\R^n$ we have a euclidean norm given by  $\|a\|=\sqrt{\langle a,a\rangle}$, where $\langle a,b\rangle$ is the inner product between vectors $a,b\in \R^n$. The euclidean distance  is
given by $d(a,b)=\|a-b\|$.
Let $\OO(n)$ be the orthogonal group acting on $\R^n$. The euclidean group $\E(n)$ consists of all maps $\R^n\to \R^n$ that preserve distance. More concretely,  $\E(n)$ consists of all pairs
 $g=(P,q)$ where $P\in \OO(n)$ and $q\in \R^n$.
An element $g=(P,q)\in \R^n$ acts on $\R^n$ by $g\cdot a=Pa+q$,
and we have $d(g\cdot a,g\cdot b)=d(a,b)$ for all $g\in \E(n)$ and $a,b\in \R^n$. Suppose that $G$ is a closed subgroup  $\E(n)$.
The distance between the orbits $G\cdot a$ and $G\cdot b$ is
\begin{equation}
d(G\cdot a,G\cdot b)=\min_{g,h\in G}d(g\cdot a,h\cdot b)=
\min_{g\in G}(g\cdot a,b).
\end{equation}
This distance is a well defined metric (see Proposition~\ref{prop:ismetric}) on $\R^n/G$.
We will also write $d_G(a,b)$ for $d(G\cdot a,G\cdot b)$.
We can think of $d_G$ is a pseudo-metric on $\R^n$
with the property that $d_G(a,b)=0$ $\Leftrightarrow$ $G\cdot a=G\cdot b$.
We would like $d(G\cdot a,G\cdot b)$ to be close
to the distance $d(\phi(a),\phi(b))$. So we make the following definition:
\begin{definition}
   A function $\phi:\R^n\to\R^m$ is called $G$-bi-Lipschitz if there exist constants $C_1,C_2>0$ such that for all $v,w\in \R^n$ we have
    \begin{equation}\label{eq:biLipschitz}
        C_1 d_G(a,b)\leq d(\phi(a),\phi(b))\leq C_2 d_G(a,b).
    \end{equation}
    If $C_1$ is the largest constant and $C_2$ is the smallest constant for which (\ref{eq:biLipschitz}) holds, then the ratio $C_2/C_1$ is called the {\em distortion}.
\end{definition}
Recall that a $G$-invariant map $\phi:\R^n\to \R^m$ factors
through the quotient $\pi:\R^n\to\R^n/G$ as  $\phi=\overline{\phi}\circ \pi$ with $\overline{\phi}:\R^n/G\to \R^m$. Now $\phi$ is $G$-bi-Lipschitz if and only if $\overline{\phi}$ is bi-Lipschitz.

The feature maps constructed from invariant theory typically do not have the $G$-bi-Lipschitz property. If the invariant ring $\R[x_1,x_2,\dots,x_n]^G$ is generated by 
invariants $f_1(x),f_2(x),\dots,f_m(x)$,
and any of these generators has degree $>1$, then the map $\phi:\R^n\to\R^m$ is not Lipschitz and therefore not $G$-bi-Lipschitz. A $G$-bi-Lipschitz map 
$\phi:\R^n\to \R^m$ has to be Lipschitz and therefore continuous, but often it cannot be differentiable (see~\cite[Section 5]{CIM2023}).
Bi-Lipschitz feature maps were studied in~\cites{CIM2023,CIMP2022,CCC2020,CCC2024,BT2023a,BT2023b,MQ2024}. 
It was shown in~\cite{CIMP2022} that a $G$-bi-Lipschitz feature map exists when $G$ is a finite subgroup of $\OO(n)$. We conjecture that a bi-Lipschitz feature map exists for any closed subgroup of the euclidean group $\E(n)$.
Some partial results were found in~\cite{MQ2024}
for infinite closed subgroups of $\OO(n)$.

\subsection{Orthogonal and euclidean group actions on tuples of vectors}
We consider the action of the euclidean group $\E(n)$ on $\ell$-tuples of vectors in $\R^n$.
We can view an $\ell$-tuple $(a_1,a_2,\dots,a_\ell)$ of vectors in $\R^n$
as an $n\times \ell$ matrix $A=[a_1\ a_2\ \cdots\ a_\ell]\in \R^{n\times \ell}$. The bilinear form on $\R^{n\times \ell}$
is given by $\langle A,B\rangle=\trace(AB^\tr)$ and the norm is given by $\|A\|=\sqrt{\langle A,A\rangle}=\sqrt{\trace(AA^\tr)}$.
An element $g\in \E(n)$
acts simultaneously on the vectors by left multiplication:
$$
g\cdot V=
g\cdot \begin{bmatrix}
    a_1 & a_2 & \cdots & a_\ell
\end{bmatrix}=\begin{bmatrix}
    g\cdot a_1 & g\cdot a_2 & \cdots & g\cdot a_\ell
\end{bmatrix}
$$
for any $g\in \E(n)$.

Let us consider the action of the orthogonal group $\OO(n)$ on $\R^{n\times \ell}$. If we follow the Invariant Theory approach we first find generating invariants. The first fundamental theorem of Invariant Theory for $\OO(n)$ states that the invariant ring is generated by all $f_{i,j}$, $1\leq i,j\leq \ell$,  where $f_{i,j}(a_1,a_2,\dots,a_\ell)=\langle a_i,a_j\rangle$ (see~\cite{GW}[4.4.2]).
We define $\phi:\R^{n\times \ell}\to \R^{\ell\times \ell}\cong \R^{\ell^2}$ by
$$
\phi(A)=A^\tr A=\begin{bmatrix}
    f_{1,1}(A) & f_{1,2}(A) & \cdots & f_{1,\ell}(A)\\
    f_{2,1}(A) & f_{2,2}(A) & \cdots & f_{2,\ell}(A)\\
    \vdots & \vdots & & \vdots\\
    f_{\ell,1}(A) & f_{\ell,2}(A) & \cdots & f_{\ell,\ell}(A)
\end{bmatrix}.
$$
Because the invariants $f_{i,j}$ have degree $2$, the map $\phi$ will not be Lipschitz. However, we can modify it so that it will become $\OO(n)$-bi-Lipschitz. The matrix $A^\tr A$ is positive semi-definite
and there is a unique positive semi-definite matrix $\sqrt{A^\tr A}$ whose square is $A^\tr A$. The following theorem gives an $\OO(n)$-bi-Lipschitz feature map and is the main result of this paper:
\begin{theorem}\label{theo:3}
Define $\phi:\R^{n\times \ell}\to \R^{\ell\times \ell}$ by
$\phi(A)=\sqrt{A^\tr A}$. Then we have
$$
d_{\OO(n)}(A,B)\leq d(\phi(A),\phi(B)) \leq \sqrt{2}d_{\OO(n)}(A,B)
$$
for all $A,B\in \R^{n\times \ell}$.
The image of $\phi$ is contained in the subspace $S^2(\R^\ell)\subseteq \R^{\ell\times \ell}$ of 
dimension ${\ell+1\choose 2}$ consisting of all
symmetric matrices. 
\end{theorem}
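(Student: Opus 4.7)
The plan is to prove each of the two inequalities separately, then record the symmetry of the image. Throughout, write $P=\phi(A)=\sqrt{A^\tr A}$ and $Q=\phi(B)=\sqrt{B^\tr B}$.

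For the lower bound $d_{\OO(n)}(A,B)\leq\|P-Q\|_F$, I expand $\|A-UB\|_F^2=\|A\|_F^2+\|B\|_F^2-2\trace(A^\tr UB)$ and use the variational identity $\max_{U\in\OO(n)}\trace(UM)=\|M\|_*$ together with the short SVD calculation $\sigma_i(BA^\tr)=\sigma_i(PQ)$ to obtain $d_{\OO(n)}(A,B)^2=\|A\|_F^2+\|B\|_F^2-2\|PQ\|_*$. Since $\|P-Q\|_F^2=\|A\|_F^2+\|B\|_F^2-2\trace(PQ)$, the claim reduces to $\trace(PQ)\leq\|PQ\|_*$. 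This holds by Weyl's inequality: $PQ$ is similar to the positive semi-definite matrix $P^{1/2}QP^{1/2}$, so its eigenvalues are non-negative, and $\trace(PQ)=\sum_i\lambda_i(PQ)\leq\sum_i\sigma_i(PQ)=\|PQ\|_*$.

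For the upper bound $\|P-Q\|_F\leq\sqrt{2}\,d_{\OO(n)}(A,B)$, the $\OO(n)$-invariance of $\phi$ gives $\|P-Q\|_F=\|\phi(A)-\phi(UB)\|_F$ for every $U\in\OO(n)$, so it suffices to prove the pointwise bound $\|P-Q\|_F\leq\sqrt{2}\,\|A-B\|_F$ and then minimize over $U$. The key device is the symmetric dilation
$$\tilde A=\begin{pmatrix} 0 & A\\ A^\tr & 0\end{pmatrix},\qquad \tilde B=\begin{pmatrix} 0 & B\\ B^\tr & 0\end{pmatrix}.$$
Since $\tilde A^2=\mathrm{diag}(AA^\tr,\,A^\tr A)$, its PSD square root is $|\tilde A|=\mathrm{diag}(\sqrt{AA^\tr},\,P)$, and likewise for $B$. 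Therefore $\||\tilde A|-|\tilde B|\|_F^2\geq\|P-Q\|_F^2$ while $\|\tilde A-\tilde B\|_F^2=2\|A-B\|_F^2$, and the entire upper bound reduces to the operator-Lipschitz estimate $\||X|-|Y|\|_F\leq\|X-Y\|_F$ for symmetric $X,Y$.

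The heart of the proof, and the main obstacle, is this last operator-Lipschitz step. I would establish the more general statement that for any $L$-Lipschitz $f:\R\to\R$ and symmetric $X=\sum_i a_i u_iu_i^\tr$, $Y=\sum_j b_j v_jv_j^\tr$, the doubly stochastic weights $w_{ij}:=\langle u_i,v_j\rangle^2$ yield the identities $\|f(X)-f(Y)\|_F^2=\sum_{i,j}w_{ij}(f(a_i)-f(b_j))^2$ and $\|X-Y\|_F^2=\sum_{i,j}w_{ij}(a_i-b_j)^2$; the scalar Lipschitz bound then lifts termwise with the same constant $L$, and the choice $f(t)=|t|$ with $L=1$ closes the argument. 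Finally, the image of $\phi$ lies in $S^2(\R^\ell)$ because $\sqrt{A^\tr A}$ is symmetric as the PSD square root of a symmetric matrix, giving the stated dimension $\binom{\ell+1}{2}$.
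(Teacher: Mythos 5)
Your proof is correct, and it takes a genuinely different route from the paper. The paper derives the real statement from the complex/unitary version (Theorem~\ref{theo:5}) via Corollary~\ref{cor:dOisdU}, and the proof of Theorem~\ref{theo:5} works entirely through explicit SVD manipulations: the Lipschitz upper bound comes from the algebraic identity
\[
2\|A-B\|^2-\bigl\|\sqrt{A^\star A}-\sqrt{B^\star B}\bigr\|^2-\bigl\|\sqrt{AA^\star}-\sqrt{BB^\star}\bigr\|^2=2\trace(C^\star D_1 C D_2)\geq 0,
\]
and the lower bound is obtained by constructing two specific unitaries $W_1,W_2$ with $W_1+W_2=2U_2^\star V_1V_2^\star U_1$ and arguing that one of them already produces $\real\trace(W_iAB^\star)\geq\trace(\sqrt{A^\star A}\sqrt{B^\star B})$. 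Your argument reaches the same two inequalities by different means. For the lower bound you invoke the Procrustes identity $d_{\OO(n)}(A,B)^2=\|A\|^2+\|B\|^2-2\|BA^\tr\|_*$, observe $\|BA^\tr\|_*=\|PQ\|_*$ from matching nonzero singular values, and close with the eigenvalue--singular-value majorization $\trace(PQ)\leq\|PQ\|_*$; this sidesteps the paper's $W_1,W_2$ construction entirely and is arguably cleaner. For the upper bound you prove exactly the paper's intermediate inequality (with both $\sqrt{A^\tr A}$ and $\sqrt{AA^\tr}$ terms), but you obtain it from the symmetric dilation $\tilde A=\begin{psmallmatrix}0&A\\A^\tr&0\end{psmallmatrix}$ together with the operator-Lipschitz estimate $\||X|-|Y|\|_F\leq\|X-Y\|_F$, which you prove via the doubly-stochastic double-sum identity $\|f(X)-f(Y)\|_F^2=\sum_{i,j}\langle u_i,v_j\rangle^2(f(a_i)-f(b_j))^2$. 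What the paper's route buys is a single computation that simultaneously yields the complex statement (Theorem~\ref{theo:5}); what yours buys is modularity --- the Frobenius operator-Lipschitz lemma and the Procrustes nuclear-norm identity are reusable, and the argument transfers verbatim to the Hermitian/unitary case by replacing $A^\tr$ with $A^\star$. One small remark: rather than saying $PQ$ is \emph{similar} to $P^{1/2}QP^{1/2}$ (which requires $P$ invertible), it is cleaner to note that $PQ$ and $P^{1/2}QP^{1/2}$ have the same nonzero eigenvalues in general, or simply cite $\trace M\leq\|M\|_*$ for any square $M$, which the paper already records.
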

Using this theorem we can also give a bi-Lipschitz feature map for the action of the euclidean group:
\begin{theorem}\label{theo:4}
Define $\psi:\R^{n\times \ell}\to \R^{\ell\times \ell}$ by
$$
\psi\left(\begin{bmatrix}
    a_1 & a_2 & \cdots & a_\ell\end{bmatrix}\right)=\phi\left(\begin{bmatrix}
        a_1-\overline{a},a_2-\overline{a},\cdots,a_{\ell}-\overline{a}
        \end{bmatrix}\right),
$$
where $\overline{a}=\frac{1}{\ell}(a_1+a_2+\cdots+a_{\ell})$ is the average and $\phi$ is as in Theorem~\ref{theo:3}. Then we have
$$
d_{\E(n)}(A,B)\leq d(\psi(A),\psi(B)) \leq \sqrt{2}d_{\E(n)}(A,B)
$$
for all $A,B\in \R^{n\times \ell}$.
The image of $\psi$ lies in a subspace of $\R^{\ell\times \ell}$ of dimension ${\ell\choose 2}$.

\end{theorem}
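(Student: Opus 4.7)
The plan is to reduce Theorem~\ref{theo:4} to Theorem~\ref{theo:3} via the centering operation. Let $\mathbf{1}\in\R^\ell$ denote the all-ones column vector, and define the centering map $\tau:\R^{n\times\ell}\to\R^{n\times\ell}$ by
$$
\tau(A)=A-\overline{a}\mathbf{1}^\tr=\begin{bmatrix} a_1-\overline{a} & \cdots & a_\ell-\overline{a}\end{bmatrix}.
$$
Then $\psi=\phi\circ\tau$, so the strategy is to compare $d_{\E(n)}(A,B)$ with $d_{\OO(n)}(\tau(A),\tau(B))$, then invoke Theorem~\ref{theo:3}.

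Two structural facts about $\tau$ will drive the argument. First, $\tau$ is the orthogonal projection onto the subspace $W\subseteq\R^{n\times\ell}$ consisting of matrices whose columns sum to zero (equivalently, whose right null space contains $\mathbf{1}$); in particular $\tau$ is $1$-Lipschitz. Second, since left multiplication by $P\in\OO(n)$ commutes with column operations and since $\tau$ annihilates the translation part $q\mathbf{1}^\tr$, we have $\tau((P,q)\cdot A)=P\tau(A)$ for every $(P,q)\in\E(n)$, and $P\tau(A)\in W$.

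For the upper bound, I would write any element $(P,q)\in\E(n)$ and use that $\tau$ does not increase norms:
$$
\|P\tau(A)-\tau(B)\|=\|\tau((P,q)\cdot A-B)\|\leq\|(P,q)\cdot A-B\|.
$$
Taking the infimum over $P$ on the left and $(P,q)$ on the right yields $d_{\OO(n)}(\tau(A),\tau(B))\leq d_{\E(n)}(A,B)$, and then Theorem~\ref{theo:3} applied to $\tau(A),\tau(B)$ gives $d(\psi(A),\psi(B))\leq\sqrt{2}\,d_{\E(n)}(A,B)$.

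For the lower bound, I would show the reverse inequality $d_{\E(n)}(A,B)\leq d_{\OO(n)}(\tau(A),\tau(B))$ by an explicit choice of translation: given $P\in\OO(n)$, set $q=\overline{b}-P\overline{a}$. A short computation using $A=\tau(A)+\overline{a}\mathbf{1}^\tr$ and $B=\tau(B)+\overline{b}\mathbf{1}^\tr$ shows that the translation parts cancel, giving
$$
(P,q)\cdot A-B=P\tau(A)-\tau(B).
$$
Hence $\|(P,q)\cdot A-B\|=\|P\tau(A)-\tau(B)\|$, and taking infima yields the required inequality. Combining with the lower bound of Theorem~\ref{theo:3} finishes the bi-Lipschitz estimate.

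The dimension statement is then a quick observation: since the columns of $\tau(A)$ sum to zero, $\tau(A)\mathbf{1}=0$, so $\mathbf{1}$ lies in the kernel of $\tau(A)^\tr\tau(A)$, and therefore also in the kernel of its unique positive semidefinite square root $\psi(A)$. Thus the image of $\psi$ is contained in the space of symmetric $\ell\times\ell$ matrices $M$ with $M\mathbf{1}=0$, which has dimension $\binom{\ell+1}{2}-\ell=\binom{\ell}{2}$. No step looks like a serious obstacle; the only mild subtlety is verifying that the explicit $q=\overline{b}-P\overline{a}$ really makes the $\mathbf{1}^\tr$-term disappear, which is the place where the specific choice of centering in the definition of $\psi$ is essential.
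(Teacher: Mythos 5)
Your proof is correct and follows the paper's overall strategy: reduce to Theorem~\ref{theo:3} by passing through the centering map (your $\tau$ is the paper's $\pi$), using the key identity $d_{\E(n)}(A,B)=d_{\OO(n)}(\tau(A),\tau(B))$, which is the paper's Lemma~\ref{lem:10}. Where you diverge is in how that identity is established: the paper proves Lemma~\ref{lem:10} indirectly, routing through the complex case (Lemma~\ref{lem:9} about $d_{\C^n}$ plus Corollary~\ref{cor:dOisdU} relating $d_{\OO(n)}$ and $d_{\U(n)}$), whereas you give a direct, self-contained argument: the upper bound $d_{\OO(n)}(\tau(A),\tau(B))\le d_{\E(n)}(A,B)$ from the fact that $\tau$ is an orthogonal projection (hence $1$-Lipschitz) that intertwines the $\OO(n)$-action and kills translations, and the lower bound by exhibiting the optimal translation $q=\overline{b}-P\overline{a}$ for any given $P$. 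Your version is more elementary and avoids the detour through $\C$, which is a small but genuine gain in clarity for the real case. Your dimension argument is also correct and slightly more conceptual than the paper's (you characterize the target subspace as the symmetric $M$ with $M\mathbf{1}=0$ rather than conjugating to block form); note that the paper's proof contains a typo at the end (it writes $\ell(\ell-2)/2$ where $\binom{\ell}{2}=(\ell-1)\ell/2$ is meant), and your count is the right one.
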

We also have complex versions of Theorems~\ref{theo:3} and~\ref{theo:4}. For complex vectors $a,b\in \C^n$ we have
an hermitian inner product $\langle a,b\rangle\in \C$. 
Let $\U(n)$ be the unitary group of all linear maps preserving the hermitian inner product. Let $\FF(n)$ be the complex euclidean group consisting of all pairs $g=(P,q)$ where $P\in \U(n)$ and $q\in \C^n$.
An element $g=(P,q)$ acts on $\C^n$ by $g\cdot a=Pa+q$.
On the space $\C^{n\times \ell}$ the hermitian inner product is given by $\langle A,B\rangle=\trace(AB^\star)$ where $B^\star$ is the complex conjugate transpose matrix of $B$. We have $\|A\|=\sqrt{\langle A,A\rangle}=\sqrt{\trace(AA^\star)}$.

\begin{theorem}\label{theo:5}
    Define $\phi_{\C}:\C^{n\times \ell}\to \C^{\ell\times \ell}$ by $\phi(A)=\sqrt{A^\star A}$. Then we have
    $$
d_{\U(n)}(A,B)\leq d(\phi_\C(A),\phi_\C(B)) \leq \sqrt{2}d_{\U(n)}(A,B)
$$
for all $A,B\in\C^{n\times \ell}$.
The image of $\phi_\C$ is contained in the subspace ${\mathcal H}(\C^\ell)\subseteq \C^{\ell\times \ell}$  of real dimension $\ell^2$ consisting 
of all hermitian matrices.
\end{theorem}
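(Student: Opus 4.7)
The plan is to replicate the proof of Theorem~\ref{theo:3} verbatim, with transposes replaced by conjugate transposes and the real inner product replaced by the Hermitian inner product. I would begin with the complex polar decompositions $A=V_A P$ and $B=V_B Q$, where $P=\sqrt{A^\star A}$ and $Q=\sqrt{B^\star B}$ are Hermitian positive semi-definite and $V_A,V_B$ are partial isometries. The Procrustes computation over $\U(n)$ gives
$$
\|A-UB\|^2 = \|A\|^2+\|B\|^2-2\real\trace(A^\star UB),
$$
and the maximum of $\real\trace(UBA^\star)$ over $U\in\U(n)$ is the nuclear norm $\|BA^\star\|_*$, attained by $U$ aligning the left and right singular vectors of $BA^\star$. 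Since $BA^\star = V_B Q P V_A^\star$ and the $V$'s are partial isometries, the singular values of $BA^\star$ coincide with those of $PQ$. Putting this together,
$$
d_{\U(n)}(A,B)^2 = \|P\|^2+\|Q\|^2-2\|PQ\|_*,
$$
whereas direct expansion gives $d(\phi_\C(A),\phi_\C(B))^2 = \|P-Q\|^2 = \|P\|^2+\|Q\|^2-2\trace(PQ)$.

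With both quantities in closed form, the lower bound is equivalent to the Weyl-type inequality $\trace(PQ)\leq\|PQ\|_*$, which holds because $PQ$ has non-negative eigenvalues (being similar to the positive semi-definite matrix $P^{1/2}QP^{1/2}$) and each eigenvalue is bounded above by the corresponding singular value. For the upper bound, I would replace $B$ by $UB$ for the minimizing $U\in\U(n)$ (this does not change $\phi_\C(B)$) so that $d_{\U(n)}(A,B) = \|A-B\|$; it then suffices to prove the pointwise inequality
$$
\bigl\|\sqrt{A^\star A}-\sqrt{B^\star B}\bigr\|\leq\sqrt{2}\,\|A-B\|,
$$
which is the Araki--Yamagami / Kittaneh inequality for the matrix absolute value, valid for rectangular complex matrices after zero-padding to make them square.

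The main (mild) obstacle is this last step: the sharp constant $\sqrt{2}$ in the Araki--Yamagami inequality is non-elementary and is precisely what forces the distortion to be $\sqrt{2}$; every other step is formally identical to the real case. Finally, $\sqrt{A^\star A}$ is Hermitian by construction and so lands in ${\mathcal H}(\C^\ell)$, whose real dimension is $\ell+2\binom{\ell}{2}=\ell^2$.
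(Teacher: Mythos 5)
Your proof is correct, but it takes a genuinely different route from the paper's in both directions of the inequality. For the lower bound, you write $d_{\U(n)}(A,B)^2 = \|P\|^2+\|Q\|^2-2\|PQ\|_\star$ with $P=\sqrt{A^\star A}$, $Q=\sqrt{B^\star B}$ and reduce to the Weyl-type inequality $\trace(PQ)\le\|PQ\|_\star$; this is clean and conceptual. The paper instead constructs two explicit unitaries $W_1,W_2$ (built by extending the SVD factors $U_i,V_i$ to square unitaries) satisfying $W_1+W_2 = 2U_2^\star V_1 V_2^\star U_1$, computes $\trace(W_1 AB^\star)+\trace(W_2 AB^\star)=2\trace\big(\sqrt{A^\star A}\sqrt{B^\star B}\big)$, and picks whichever $W_i$ gives the larger real trace; your argument bypasses that construction entirely by exploiting the nuclear-norm characterization of the Procrustes minimum together with $|\trace M|\le\|M\|_\star$. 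For the upper bound, you invoke the Araki--Yamagami inequality $\big\||A|-|B|\big\|\le\sqrt2\,\|A-B\|$ as a black box (and correctly note the zero-padding needed for rectangular matrices), whereas the paper proves it from scratch: the SVD computation establishes the stronger two-term identity
$$
2\|A-B\|^2-\big\|\sqrt{A^\star A}-\sqrt{B^\star B}\big\|^2-\big\|\sqrt{AA^\star}-\sqrt{BB^\star}\big\|^2 = 2\trace\big(C^\star D_1 C D_2\big)\ge 0
$$
with $C=V_1^\star V_2 - U_1^\star U_2$, from which the $\sqrt2$ constant drops out while also controlling $\sqrt{AA^\star}-\sqrt{BB^\star}$. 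Your route is shorter if you are willing to cite Araki--Yamagami; the paper's is self-contained and recovers slightly more. One step in your argument that deserves a line of justification: the identity $\|BA^\star\|_\star=\|PQ\|_\star$ uses that the partial isometries $V_A,V_B$ satisfy $V_A^\star V_A P=P$ and $V_B^\star V_B Q=Q$ (they project onto the ranges of $P$ and $Q$, respectively), so the nonzero singular values of $BA^\star=V_B QP V_A^\star$ and of $PQ$ coincide even though $V_A,V_B$ need not be unitary when $n\ne\ell$.
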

\begin{theorem}\label{theo:6}
Define $\psi_\C:\C^{n\times \ell}\to \C^{\ell\times \ell}$ by
$$
\psi_\C\left(\begin{bmatrix}
    a_1 & a_2 & \cdots & a_\ell\end{bmatrix}\right)=\phi_\C\left(\begin{bmatrix}
        a_1-\overline{a},a_2-\overline{a},\cdots,a_{\ell}-\overline{a}
        \end{bmatrix}\right),
$$
where $\overline{a}=\frac{1}{\ell}(a_1+a_2+\cdots+a_{\ell})$ is the average and $\phi_\C$ is as in Theorem~\ref{theo:5}. Then we have
$$
d_{\FF(n)}(A,B)\leq d(\psi_\C(A),\psi_\C(B)) \leq \sqrt{2}d_{\FF(n)}(A,B)
$$
for all $A,B\in \R^{n\times \ell}$. The image of $\psi_\C$ is contained in a subspace of $\C^{\ell\times \ell}$ of real dimension $(\ell-1)^2$.
\end{theorem}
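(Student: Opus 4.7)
The plan is to reduce Theorem~\ref{theo:6} to Theorem~\ref{theo:5} by the same centering trick that underlies Theorem~\ref{theo:4}. Writing $g=(P,q)\in\FF(n)$ with $P\in\U(n)$ and $q\in\C^n$, the action on a matrix $A\in\C^{n\times\ell}$ is $g\cdot A=PA+q\mathbf{1}^\tr$, where $\mathbf{1}=(1,\dots,1)^\tr\in\C^\ell$. The column average transforms as $\overline{g\cdot a}=P\overline{a}+q$, so the centered matrix $\widetilde{A}:=A-\overline{a}\mathbf{1}^\tr$ transforms purely unitarily: $\widetilde{g\cdot A}=P\widetilde{A}$, with no dependence on $q$. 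By construction $\psi_\C(A)=\phi_\C(\widetilde{A})$.

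First I would establish the identity $d_{\FF(n)}(A,B)=d_{\U(n)}(\widetilde{A},\widetilde{B})$. The space $\C^{n\times\ell}$ splits orthogonally (for the trace-hermitian inner product) as the direct sum of the subspace $\{M:M\mathbf{1}=0\}$ and the subspace of rank-one matrices $\{v\mathbf{1}^\tr:v\in\C^n\}$: orthogonality follows from $\trace(M(v\mathbf{1}^\tr)^\star)=v^\star M\mathbf{1}=0$. For fixed $P$ the decomposition
$$
A-PB-q\mathbf{1}^\tr = \bigl(\widetilde{A}-P\widetilde{B}\bigr)+\bigl(\overline{a}-P\overline{b}-q\bigr)\mathbf{1}^\tr
$$
places its two summands in these orthogonal subspaces, so $\|A-PB-q\mathbf{1}^\tr\|^2$ is minimized in $q$ at $q=\overline{a}-P\overline{b}$, with residual $\|\widetilde{A}-P\widetilde{B}\|$. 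Minimizing then over $P$ gives the claimed identity.

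Second, I apply Theorem~\ref{theo:5} to $\widetilde{A},\widetilde{B}$ and use $\psi_\C(A)=\phi_\C(\widetilde{A})$ to obtain
$$
d_{\U(n)}(\widetilde{A},\widetilde{B})\leq d(\psi_\C(A),\psi_\C(B))\leq \sqrt{2}\,d_{\U(n)}(\widetilde{A},\widetilde{B}),
$$
which combined with the identity of step one yields the bi-Lipschitz bounds with distortion $\sqrt{2}$.

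Finally, for the image dimension, note that $\widetilde{A}\mathbf{1}=0$ forces $\mathbf{1}$ into the kernel of $\widetilde{A}^\star\widetilde{A}$ and hence into the kernel of its positive semi-definite square root $\psi_\C(A)$. Therefore the image of $\psi_\C$ lies in the space of hermitian $\ell\times\ell$ matrices annihilating $\mathbf{1}$, which is canonically the space of hermitian operators on the complex $(\ell-1)$-dimensional orthogonal complement of $\mathbf{1}$, of real dimension $(\ell-1)^2$. The only step requiring real care is the separation of the minimizations in $q$ and $P$, but this follows cleanly from the orthogonal decomposition above, so no substantial obstacle remains once Theorem~\ref{theo:5} is granted.
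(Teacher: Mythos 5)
Your proof is correct and follows essentially the same route as the paper: center the columns to reduce $d_{\FF(n)}$ to $d_{\U(n)}$ of the centered matrices, then invoke Theorem~\ref{theo:5}, and observe that $\psi_\C(A)$ is a hermitian matrix annihilating $\mathbf{1}$. The only stylistic difference is that you re-derive the key identity $d_{\FF(n)}(A,B)=d_{\U(n)}(\pi_\C(A),\pi_\C(B))$ inline by separating the minimization over $q$ from that over $P$ via the orthogonal splitting $\C^{n\times\ell}=\{M:M\mathbf{1}=0\}\oplus\{v\mathbf{1}^\tr\}$, whereas the paper packages this as Lemma~\ref{lem:10} (proved by a sandwich of inequalities built on Lemma~\ref{lem:9}); both rest on the same centering computation.
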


The $G$-bi-Lipschitz feature maps that we construct
for $G=\OO(n),\E(n),\U(n),\FF(n)$ map from a space of dimension $O(n\ell)$ to a space of dimension $O(\ell^2)$
(here $O(\cdot)$ is big-O notation, not the orthogonal group).
The following theorem shows us that it is possible
to construct a $G$-bi-Lipschitz map to a space of dimension $O(n\ell)$. The construction in the theorem is explicit,
even though we do not have any explicit upper bounds for the distortion of these $G$-bi-Lipschitz maps.

\begin{theorem}\label{theo:bi-Lipschitz}\ 
Suppose $\ell\geq 2n$. 
\begin{enumerate}
    \item There exists an $\OO(n)$-bi-Lipschitz map $\phi':\R^{n\times \ell}\to \R^{n(2\ell-2n+1)}$.
    \item There exists an $\E(n)$-bi-Lipschitz map
    $\psi':\R^{n\times \ell}\to \R^{n(2\ell-2n-1)}$.
       \item There exists an $\U(n)$-bi-Lipschitz map $\phi'_\C:\C^{n\times \ell}\to \R^{4n(\ell-n)}$.
    \item There exists an $\FF(n)$-bi-Lipschitz map
    $\psi'_\C:\C^{n\times \ell}\to \R^m$ where $4n(\ell-n-1)$.    
\end{enumerate}
\end{theorem}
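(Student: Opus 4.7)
The plan is to post-compose each of the bi-Lipschitz maps $\phi,\psi,\phi_\C,\psi_\C$ of Theorems~\ref{theo:3}--\ref{theo:6} with a Zariski-generic linear surjection onto a subspace of the stated target dimension $D$, and verify that the restriction remains bi-Lipschitz on the image.

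By Theorem~\ref{theo:3} the image $\phi(\R^{n\times\ell})$ lies in the closed semi-algebraic cone of symmetric positive semi-definite $\ell\times\ell$ matrices of rank $\leq n$. The difference of two such matrices has rank $\leq 2n$, so the closed difference cone
$
\Sigma=\overline{\phi(\R^{n\times\ell})-\phi(\R^{n\times\ell})}
$
is contained in the variety of rank-$\leq 2n$ symmetric $\ell\times\ell$ matrices, whose dimension equals $2n\ell-\binom{2n}{2}=n(2\ell-2n+1)$. The same pattern produces the other three target dimensions. For $\psi$ the centered columns sum to zero, so the image annihilates $\mathbf{1}\in\R^\ell$ and the difference cone sits inside rank-$\leq 2n$ symmetric matrices on $\mathbf{1}^\perp\cong\R^{\ell-1}$, of dimension $n(2\ell-2n-1)$. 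For $\phi_\C$ one replaces the symmetric variety by Hermitian matrices of rank $\leq 2n$, of real dimension $2\cdot 2n\cdot\ell-(2n)^2=4n(\ell-n)$; for $\psi_\C$ the Hermitian analogue on $\mathbf{1}^\perp\subseteq\C^\ell$ has real dimension $4n(\ell-n-1)$. These are precisely the four target dimensions in the statement.

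The technical core is the following bi-Lipschitz projection lemma: if $X\subseteq\R^M$ is a closed cone and $\Sigma=\overline{X-X}$ has dimension $\leq D$, then a Zariski-generic linear surjection $L:\R^M\to\R^D$ satisfies $\ker(L)\cap\Sigma=\{0\}$. In $\mathbb{P}^{M-1}$ the projective subspace $\mathbb{P}(\ker L)$ has dimension $M-D-1$ and the projective variety $\mathbb{P}(\Sigma)$ has dimension $\leq D-1$, so their dimensions sum to at most $M-2<M-1$, and a generic codimension-$D$ linear subspace misses $\mathbb{P}(\Sigma)$. For such an $L$, the continuous scale-invariant function $v\mapsto \|Lv\|/\|v\|$ is strictly positive on the compact set $\Sigma\cap S^{M-1}$ and therefore attains a positive minimum $c_L>0$. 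By homogeneity,
\begin{equation*}
c_L\,\|x-y\| \leq \|L(x-y)\| \leq \|L\|\,\|x-y\| \qquad \text{for all } x,y\in X,
\end{equation*}
so $L|_X$ is bi-Lipschitz. Composing such an $L$ with each of $\phi,\psi,\phi_\C,\psi_\C$ produces the four $G$-bi-Lipschitz maps asserted in the theorem.

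The main obstacle is the dimensional coincidence that the rank-$\leq 2n$ variety containing $\Sigma$ already has dimension exactly $D$; it is precisely this coincidence that allows projection onto $\R^D$ rather than $\R^{D+1}$. The argument is non-constructive in the choice of $L$ and gives no control on $c_L$, so no explicit upper bound on the distortion is obtained, consistent with the paper's remark after the statement.
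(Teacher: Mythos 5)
Your proposal is correct and reaches each of the four target dimensions by the same dimension arithmetic as the paper, but the mechanism for producing the dimension-reducing projection is genuinely different. You take a Zariski-generic linear map $L:\R^M\to\R^D$ and argue by an incidence/dimension count in $\mathbb{P}^{M-1}$ that a generic kernel of dimension $M-D$ misses the rank-$\leq 2n$ cone (which has dimension exactly $D$), then deduce bi-Lipschitzness by compactness of the difference cone intersected with the unit sphere. The paper instead makes the subspace $W_r$ \emph{explicit}: it identifies $\C^{\ell\times\ell}$ with the polynomial space $\C[x,y]_{<\ell,<\ell}$, takes $W_r$ to be the degree-truncated ideal generated by $(x-y)^r$, and then proves $W_r\cap X_r=\{0\}$ directly via a Wronskian argument, after which it runs the same compactness step to get a positive lower constant. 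In fact the paper explicitly remarks that ``any subspace $W_r$ of dimension $(\ell-r)^2$ in general position will do,'' which is precisely what your genericity argument establishes; the paper's contribution beyond that remark is the concrete polynomial description of one such $W_r$. Your route is conceptually lighter and dispenses with the Wronskian, at the cost of producing a non-constructive projection; the paper's route is constructive and gives an explicit $W_r$, though, as both proofs note, neither approach yields a quantitative bound on the distortion constant. One small imprecision you should tidy: the genericity lemma should be phrased for the ambient real algebraic variety of rank-$\leq 2n$ symmetric (resp.\ Hermitian) matrices rather than for the possibly non-algebraic set $\Sigma=\overline{X-X}$, and the dimension count in real projective space needs the semi-algebraic version of the incidence argument rather than the complex one — but this is routine and the containment $\Sigma\subseteq X_{2n}$ makes the application go through unchanged.
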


\subsection{Applications}
In many data applications, the data has may have a lot of symmetries. One can exploit these symmetries to develop more effective machine learning algorithms. For example, convolutional neural network exploit the translation symmetries of images, video or audio signals.

In biological shape analysis, landmarks are discrete anatomical points on  biological specimens. For example, if we consider $\ell$ landmarks, then for one specimen the landmarks are located at the points $a_1,a_2,\dots,a_\ell\in \R^3$
and for another specimen the correponding landmarks are located at $b_1,b_2,\dots,b_\ell\in \R^3$. The distance between two specimen is $d_{\E(3)}(A,B)$
where $A=[a_1\ a_2\ \cdots\ a_\ell]$ and $B=[b_1\ b_2\ \cdots\ b_\ell]$. (Instead of the group $\E(3)$ one may want to exclude mirror images, and work with the subgroup $\operatorname{SE}(3)=\operatorname{SO}(3)\ltimes \R^3\subset \OO(3)\ltimes\R^3=\E(3)$).

Another application is space navigation by tracking stars. For example, if a group of $\ell$ stars is observed by a camera, we know the direction of the stars. These directions are $\ell$ points $a_1,a_2,\dots,a_\ell$ on the unit sphere in $\R^3$. To compare two $\ell$-tuples of stars
$A=[a_1\ a_2\ \cdots\ a_\ell]$ and $B=[b_1\ b_2\ \cdots\ b_\ell]$ we can consider  the distance $d_{\OO(3)}(A,B)$
or $d_{\operatorname{SO}(3)}(A,B)$. Since the stars are typically unlabeled, we also may want to consider
the action of the larger groups $\operatorname{SO}(3)\times S_{\ell}$ or $\OO(3)\times S_\ell$ where $S_\ell$
acts by permuting the $\ell$ stars.

For an action of $G$ on $\R^n$ there is a major advantage to having a $G$-bi-Lipschitz feature map. For example, consider the {\em nearest orbit problem}: Given a point $a\in \R^n$ and a database of points $b_1,b_2,\dots,b_k$ where $k$ is very large. Find $i$ for which the orbit $G\cdot b_i$ is nearest to the orbit $G\cdot a$. A simple linear search
computes
$d(G\cdot a,G\cdot b_j)=d_G(a,b_j)$ for $j=1,2,\dots,k$
and the running time will be $\Omega(k)$.

Suppose we have a $G$-bi-Lipschitz map $\phi:\R^n\to \R^m$ 
with
$$
d_G(x,y)\leq d(\phi(x),\phi(y))\leq Cd_G(x,y)
$$
for all $x,y\in \R^n$ and some fixed constant $C$. Using the $G$-bi-Lipschitz map, we can relate the nearest orbit problem to the nearest neighbor search problem in euclidean space. 
Given a point $u$ and a database of points $v_1,v_2,\dots,v_k\in \R^m$, 
the nearest neighbor problem in euclidean space seeks the index $i$ for which $d(u,v_i)$ is minimal. If $m=1$, the database
can be prepared such that $v_1,v_2,\dots,v_k$ are arranged from small to large. Then the index $i$ with $d(u,v_i)$ minimal can be found using a binary search. This algorithm runs in time $O(\log(k))$. If $m>1$ then there is a nearest neighbor search algorithm in $\R^m$ using an $m$-dimensional tree that runs (in the worst case) in time $O(mk^{1-1/m})$ (see~\cite{LW1977}). For $E\geq 1$, an $E$-approximate nearest neighbor an index $i$ such that $d(u,v_i)\leq Ed(u,v_j)$
for all $j$. The approximate nearest neighbor problem is easier than the exact nearest neighbor problem. 
For fixed $E>1$ and $m$, there exists an algorithm
for the $E$-approximate nearest neighbor search that runs in time $O(\log(k))$ (see~\cite{AMNSW1998}).

Using a nearest neighbor algorithm for euclidean space
we can find $i$ such that $\phi(b_i)$ is an $E$-approximate
nearest neighbor of $\phi(a)$. Then for 
all $j$ we have 
$d_G(a,b_i)\leq d(\phi(a),\phi(b_i))\leq E d(\phi(a),\phi(b_j))\leq EC d_G(a,b_j)$.
So the orbit $G\cdot b_i$ is the $(EC)$-approximate nearest norbit to $G\cdot a$.

\section{Orbit metrics}
\subsection{A metric on the orbit space}
For the action of a group $G$ on the metric space $\R^n$
there is a pseudo-metric on $\R^n/G$ defined as follows:
the distance between two orbits $G\cdot a$ and $G\cdot b$
is the infimum over all
$$
d(p_1,q_1)+d(p_2,q_2)+\cdots+d(p_r,q_r)
$$
where $r$ is a positive integer, $G\cdot p_1=G\cdot a$, $G\cdot q_r=G\cdot b$
and $G\cdot q_i=G\cdot p_{i+1}$ for $i=1,2,\dots,r-1$.

We will consider the special case where $G$ is a subgroup of the euclidean group $\E(n)$. It turns out that the infimum is actually a minimum, and this minimum is already attained for $r=1$. In this case, the pseudo-metric is an actual metric.

\begin{lemma}\label{lem:minexists}
Suppose that $G$ is a closed subgroup of the Lie group $\E(n)$. 
Then for every $a,b\in \R^n$, there exists an element $g\in G$ for which $d(g\cdot a,b)$ is minimal.
\end{lemma}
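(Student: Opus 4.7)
The plan is a standard compactness argument: the function $g \mapsto d(g \cdot a, b)$ is continuous on the topological group $\E(n)$, so if I can confine some minimizing sequence to a compact subset of $G$, a limit point will realize the minimum. Since $G$ is closed in $\E(n)$ but not itself compact (due to the translation factor), the only real task is to rule out escape to infinity along the translation direction.

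First I would parameterize elements of $\E(n)$ as pairs $g = (P,q)$ with $P \in \OO(n)$ and $q \in \R^n$, acting by $g \cdot a = Pa + q$. Set $\alpha = \inf_{g \in G} d(g \cdot a, b)$ and pick a minimizing sequence $g_k = (P_k, q_k) \in G$ with $\|P_k a + q_k - b\| \to \alpha$. The triangle inequality together with $\|P_k a\| = \|a\|$ gives
$$
\|q_k\| \leq \|P_k a + q_k - b\| + \|P_k a\| + \|b\| = \|P_k a + q_k - b\| + \|a\| + \|b\|,
$$
so the sequence $(q_k)$ is bounded in $\R^n$. Since $\OO(n)$ is compact, the sequence $(P_k)$ automatically lies in a compact set.

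Passing to a subsequence, I may then assume $P_k \to P$ in $\OO(n)$ and $q_k \to q$ in $\R^n$, hence $g_k \to g := (P, q)$ in $\E(n)$. Because $G$ is closed in $\E(n)$, the limit $g$ lies in $G$, and by continuity of the action $d(g \cdot a, b) = \lim_{k \to \infty} d(g_k \cdot a, b) = \alpha$, so the infimum is attained at $g$.

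I do not anticipate a serious obstacle: the argument reduces to coercivity of $\|q\|$ along a minimizing sequence, which falls out of one line of the triangle inequality, plus compactness of $\OO(n)$. The only point to be cautious about is that $\E(n) \cong \OO(n) \ltimes \R^n$ is given the product topology on $\OO(n) \times \R^n$, so that componentwise convergence is the same as convergence in $\E(n)$; this is standard for the semidirect product Lie group structure.
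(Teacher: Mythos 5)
Your proof is correct and takes essentially the same approach as the paper: both hinge on the coercivity estimate from the triangle inequality showing that the translation part $q$ stays bounded, combined with compactness of $\OO(n)$ and closedness of $G$. The paper packages this as ``$\theta$ is proper, hence closed, hence $\theta(G)$ is closed and attains its infimum,'' while you run the equivalent concrete argument with a minimizing sequence and a convergent subsequence.
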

\begin{proof}
Consider the function $\theta:\E(n)\to [0,\infty)$ defined by
$\theta(g)=d(g\cdot a,b)$.
The function $\theta$ is continuous. Suppose that $C\geq 0$ is a constant
and let $S=\theta^{-1}([0,C])=\{g\in \E(n)\mid d(g\cdot a,b)\leq C\}$. 
If $g=(P,q)\in S$, then we have $d(q,b)\leq d(q,Pa+q)+d(Pa+q,b)\leq \|Pa\|+C=\|a\|+C$. This shows that $S$ is bounded. The inverse images of
bounded sets are bounded. So the map $\theta$ is proper.
It follows that $\theta$ is a closed map.
The subset
$D=\{d(g\cdot a,b)\mid g\in G\}\subseteq [0,\infty)$ is closed. Therefore this set has a smallest element.
\end{proof}

\begin{lemma}\label{lem:minequal}
    Suppose that $G$ is a closed subgroup of the Lie group $\E(n)$. If $a,b\in \R^n$ then we have
    $$
    \min_{g\in G}d(g\cdot a,b)=
    \min_{g,h\in G}d(g\cdot a,h\cdot b)=\min_{h\in G} d(a,h\cdot b).
    $$
\end{lemma}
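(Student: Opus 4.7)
The plan is to exploit the fact that every element of $\E(n)$, and hence of any subgroup $G\subseteq \E(n)$, acts as an isometry of $\R^n$. Concretely, for any $g,h\in G$, I would compute
$$
d(g\cdot a,h\cdot b)=d(h^{-1}\cdot(g\cdot a),h^{-1}\cdot(h\cdot b))=d((h^{-1}g)\cdot a,b),
$$
since $h^{-1}\in G\subseteq \E(n)$ preserves distances. Setting $k=h^{-1}g$, the right-hand side is $d(k\cdot a,b)$, and as $g,h$ range independently over $G$, the product $k=h^{-1}g$ also ranges over all of $G$ (because $G$ is a group). This immediately shows
$$
\min_{g,h\in G}d(g\cdot a,h\cdot b)=\min_{k\in G}d(k\cdot a,b).
$$
An entirely symmetric computation, writing $d(g\cdot a,h\cdot b)=d(a,(g^{-1}h)\cdot b)$, delivers the third equality.

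What remains is to confirm that the minima actually exist. The existence of $\min_{g\in G}d(g\cdot a,b)$ is exactly Lemma~\ref{lem:minexists}. The existence of $\min_{h\in G}d(a,h\cdot b)$ follows either by swapping the roles of $a$ and $b$ in that lemma or by observing that $d(a,h\cdot b)=d(h^{-1}\cdot a,b)$ and reindexing by $h^{-1}$. Finally, the double minimum is reduced to the single minimum by the reindexing above, so its existence is automatic.

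I do not anticipate a genuine obstacle here: the statement is essentially a one-line consequence of the fact that $G$ acts by isometries together with the group axioms, and Lemma~\ref{lem:minexists} supplies the compactness-type input needed to upgrade the infima to minima.
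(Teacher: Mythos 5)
Your proposal is correct and follows essentially the same route as the paper: reindex the double minimum using the fact that $G$ acts by isometries, so $d(g\cdot a,h\cdot b)=d(h^{-1}g\cdot a,b)=d(a,g^{-1}h\cdot b)$, and let $h^{-1}g$ (resp.\ $g^{-1}h$) range over $G$. Your explicit appeal to Lemma~\ref{lem:minexists} to justify that the infima are attained is a minor but welcome addition of care over the paper's terser argument.
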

\begin{proof}
Because $g^{-1}$ and $h^{-1}$ acts by  euclidean transformations, we have
$$d(h^{-1}g\cdot a,b)=(g\cdot a,h\cdot b)=d( a,g^{-1}h\cdot b).$$ 
Now the lemma follows from taking the minimum over all $g,h\in G$. 
\end{proof}
\begin{definition}
    Suppose that $G$ is a closed subgroup of $\E(n)$. If $a,b\in \R^n$ then We define the distance between the orbits $G\cdot a$ and $G\cdot b$ by $d(G\cdot a,G\cdot b)=\min_{g\in G,h\in H} d(g\cdot a,h\cdot b)$.
\end{definition}
By Lemma~\ref{lem:minequal}, the minimum exists and is well-defined. 

\begin{proposition}\label{prop:ismetric}
    The distance between orbits gives a metric on the orbit space $\R^n/G$. 
    \end{proposition}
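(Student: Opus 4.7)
My plan is to verify the four metric axioms (non-negativity, symmetry, identity of indiscernibles, and the triangle inequality) for the function $\overline{d}(G\cdot a, G\cdot b) := \min_{g\in G} d(g\cdot a, b)$ on $\R^n/G$, leaning heavily on Lemmas~\ref{lem:minexists} and~\ref{lem:minequal}. First I need to confirm that $\overline{d}$ is well-defined on the quotient: if $G\cdot a = G\cdot a'$ and $G\cdot b = G\cdot b'$, then by Lemma~\ref{lem:minequal} the three equivalent expressions for the minimum only depend on the orbits, not on chosen representatives.

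Non-negativity is immediate from $d \geq 0$. For the identity of indiscernibles, I would argue that $\overline{d}(G\cdot a, G\cdot b) = 0$ forces, via Lemma~\ref{lem:minexists}, the existence of some $g \in G$ with $d(g\cdot a, b) = 0$, i.e.\ $g\cdot a = b$, which means $b \in G\cdot a$ and hence $G\cdot a = G\cdot b$; the converse is clear because then we may take $g$ with $g\cdot a = b$ to realize the minimum as $0$. Symmetry follows from the symmetric middle expression $\min_{g,h\in G} d(g\cdot a, h\cdot b)$ in Lemma~\ref{lem:minequal}.

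The triangle inequality is the only step that requires a small argument. Given orbits $G\cdot a$, $G\cdot b$, $G\cdot c$, I would invoke Lemma~\ref{lem:minexists} to choose $g_1 \in G$ with $d(g_1\cdot a, b) = \overline{d}(G\cdot a, G\cdot b)$, and (using the third form in Lemma~\ref{lem:minequal}) choose $g_2 \in G$ with $d(b, g_2\cdot c) = \overline{d}(G\cdot b, G\cdot c)$. Then the triangle inequality for $d$ on $\R^n$ yields
\[
\overline{d}(G\cdot a, G\cdot c) \;\leq\; d(g_1\cdot a, g_2\cdot c) \;\leq\; d(g_1\cdot a, b) + d(b, g_2\cdot c),
\]
and the right-hand side equals $\overline{d}(G\cdot a, G\cdot b) + \overline{d}(G\cdot b, G\cdot c)$.

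There is no real obstacle here; the proof is routine once one has the two preceding lemmas, since they together guarantee both the existence of minimizers and the freedom to shift the group action onto either argument, which is exactly what the triangle inequality needs. The only subtle point to state cleanly is well-definedness on the quotient, so that one is genuinely defining a function on pairs of orbits rather than pairs of representatives.
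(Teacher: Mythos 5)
Your proof is correct and follows essentially the same route as the paper: non-negativity and symmetry are immediate, the identity of indiscernibles uses the existence of a minimizer from Lemma~\ref{lem:minexists}, and the triangle inequality passes through a common point $b$ using the freedom (Lemma~\ref{lem:minequal}) to shift the group action onto either argument. Your presentation is slightly more explicit about well-definedness and about choosing concrete minimizers $g_1, g_2$, but the underlying argument is the same as the paper's.
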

\begin{proof}
Clearly $d(G\cdot a,G\cdot b)\geq 0$. If 
$d(G\cdot a,G\cdot b)=0$ then we have
$d(g\cdot a,h\cdot b)=0$ for some $g,h\in G$, so $g\cdot a=h\cdot b$ and  $G\cdot a=G\cdot b$. The distance $d$ on $\R^n/G$ is clearly symmetric. We also have the triangle inequality:
$$
d(G\cdot a,G\cdot c)=\min_{g,h\in G} d(g\cdot a,h\cdot c)\leq \min_{g,h\in G}\Big(d(g\cdot a,b)+d(b,h\cdot c)\Big)=d(G\cdot a,G\cdot b)+d(G\cdot b,G\cdot c).
$$

\end{proof}

\subsection{The pseudometrics $d_{\U(n)}$ and $d_{\OO(n)}$}\label{sec:2.3}
The group $\U(n)$ acts on $\C^{n\times \ell}$ by left multiplication.
We first study the pseudometric $d_{\U(n)}$. 
Every matrix $A\in \C^{n\times \ell}$ has a singular value decomposition $A=UDV^\star$ where $U\in \U(n)$, $V\in \U(\ell)$
and $D\in \C^{n\times \ell}$ is a matrix with nonnegative real entries
$\lambda_1\geq \lambda_2\geq \cdots\geq \lambda_{r}\geq 0$ ($r=\min\{\ell,n\}$) on the diagonal and zeroes elsewhere.
The numbers $\lambda_1,\lambda_2,\dots,\lambda_r$ are the singular values of $A$. The nuclear norm $\|A\|_\star$ of $A$ is the sum of the singular values $\|A\|_\star=\lambda_1+\lambda_2+\cdots+\lambda_r$.
We have $\real(\trace(A))\leq \|A\|_\star$ for all $A\in \C^{n\times \ell}$.

Finding the distance
$d_{\OO(n)}(A,B)$ between $A,B\in \R^{n\times \ell}$
is known as the orthogonal Procrustus problem and was solved in~\cite{Schonemann1966}. We will give a solution here and also to the unitary version of it.

\begin{lemma}\label{lem:dU}
For $A,B\in \C^{n\times \ell}$ we have
$$
d_{\U(n)}(A,B)^2=\|A\|^2+\|B\|^2-2\|AB^\star\|_\star.
$$
There exists a unitary matrix $W\in \U(n)$ such that
$WAB^\star$ is positive definite hermitian. For any such $W$ we have
$d_{\U(n)}(A,B)=\|WA-B\|$.
\end{lemma}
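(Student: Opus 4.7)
The plan is to expand $\|gA-B\|^2$ using the trace inner product, reducing the minimization over $g\in \U(n)$ to maximizing $\real(\trace(gAB^\star))$. Using $g^\star g = I$ and cyclicity of trace, one gets
\begin{equation*}
\|gA-B\|^2 = \trace\bigl((gA-B)(gA-B)^\star\bigr) = \|A\|^2+\|B\|^2 - 2\real(\trace(gAB^\star)).
\end{equation*}
So the formula for $d_{\U(n)}(A,B)^2$ will follow once I establish the key identity $\max_{g\in \U(n)}\real(\trace(gM)) = \|M\|_\star$ for an arbitrary square matrix $M$.

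To prove this identity, I would take an SVD $M = U\Sigma V^\star$ with $U,V\in \U(n)$ and $\Sigma = \operatorname{diag}(\sigma_1,\ldots,\sigma_n)$, $\sigma_i \geq 0$. Setting $Q = V^\star gU$, which is unitary, we have $\trace(gM) = \trace(Q\Sigma) = \sum_i Q_{ii}\sigma_i$. Since the columns of a unitary matrix have unit norm, $|Q_{ii}|\leq 1$, hence $\real(\trace(gM))\leq \sum_i \sigma_i = \|M\|_\star$, with equality when $Q = I$, i.e.\ $g = VU^\star$. Taking $M = AB^\star$ then gives the desired formula; moreover the explicit maximizer $W = VU^\star$ satisfies $WAB^\star = V\Sigma V^\star$, which is hermitian positive semi-definite.

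For the converse direction (\emph{any} $W$ with $WAB^\star$ positive hermitian achieves the minimum), suppose $H := WAB^\star$ is positive semi-definite hermitian. Then the eigenvalues of $H$ coincide with its singular values, so $\real(\trace(H)) = \trace(H) = \|H\|_\star$, and since $W$ is unitary, $\|H\|_\star = \|AB^\star\|_\star$. Plugging into the expansion above yields $\|WA-B\|^2 = \|A\|^2+\|B\|^2-2\|AB^\star\|_\star = d_{\U(n)}(A,B)^2$.

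The only nontrivial step is the trace inequality $\real(\trace(gM))\leq \|M\|_\star$ (a special case of the von Neumann trace inequality), but after the SVD substitution it collapses to the elementary fact that diagonal entries of a unitary matrix are bounded by one in modulus, so no real obstacle arises. Lemma~\ref{lem:minexists} guarantees that the minimum in the definition of $d_{\U(n)}$ is attained, which is consistent with the construction of $W$ above.
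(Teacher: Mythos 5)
Your proof follows essentially the same route as the paper: expand $\|gA-B\|^2$ via the trace inner product, invoke $\real\trace(gM)\leq\|M\|_\star$, and produce an explicit maximizer $W=VU^\star$ from the SVD of $AB^\star$. Two small differences are worth noting: you give a self-contained proof of the trace bound (via $|Q_{ii}|\leq 1$ for unitary $Q$) where the paper simply asserts it, and you explicitly justify the ``for \emph{any} such $W$'' clause by observing that a positive semi-definite hermitian matrix has trace equal to its nuclear norm -- a point the paper's proof leaves implicit, since it only verifies the conclusion for the particular $W$ it constructs. One cosmetic remark: as in the paper, the matrix $WAB^\star$ is in general only positive \emph{semi}-definite (it can be singular if $A$ or $B$ is rank-deficient), and your proof correctly works at that level of generality even though the lemma's wording says ``positive definite.''
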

\begin{proof}
If $W\in \U(n)$ then we have 
\begin{multline}\label{eq:WAB}
\|WA-B\|^2=\trace((WA-B)(WA-B)^\star)
=\trace(WAA^\star W^\star+BB^\star
-WAB^\star-BA^\star W^\star)
=\\
\trace(AA^\star)+\trace(BB^\star)-\trace(WAB^\star)-\overline{\trace(WAB^\star)}=\\
=\|A\|^2+\|B\|^2-2\real(\trace(WAB^\star))\geq
\|A\|^2+\|B\|^2-2\|WAB^\star\|_\star=
\|A\|^2+\|B^2\|^2-2\|AB^\star\|_\star.
\end{multline}
Let $AB^\star=UDV^\star$ be the singular value decomposition where
$U,V\in \C^{n\times n}$ are unitary, and $D\in \C^{n\times n}$ is diagonal with nonnegative real entries. For $W=VU^\star$ we have
that $WAB^\star=VU^\star UDV^\star=VDV^\star$ is nonnegative semidefinite hermitian
and 
$$\trace(WAB^\star)=\trace(VDV^\star)=\trace(D)=\|AB^\star\|_\star.$$
We have equality in~(\ref{eq:WAB}), so $\|WA-B\|=\min_{g\in \U(k)}\|gA-B\|=d_{\U(k)}(A,B)$
and $d_{\U(k)}(A,B)^2=\|WA-B\|^2=\|A\|^2+\|B\|^2-2\|AB^\star\|_\star$.
\end{proof}

\begin{corollary}\label{cor:dOisdU}
If $A,B\in \R^{n\times \ell}$ then there exists an orthogonal matrix $W\in \OO(n)$
such that $WAB^\tr$ is a nonnegative definite symmetric matrix and
$d_{\OO(n)}(A,B)=\|WA-B\|=d_{\U(n)}(A,B)$.
\end{corollary}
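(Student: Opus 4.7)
The plan is to imitate the proof of Lemma~\ref{lem:dU}, but take advantage of the fact that when $A,B$ are real, the matrix $AB^{\tr}$ is real, hence admits an SVD over the reals. Combined with the trivial inclusion $\OO(n)\subseteq \U(n)$, this will force equality of the two distances.

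First I would observe that $\OO(n)\subseteq \U(n)$ implies immediately
$$d_{\OO(n)}(A,B)=\min_{g\in \OO(n)}\|gA-B\|\geq \min_{g\in \U(n)}\|gA-B\|=d_{\U(n)}(A,B),$$
so the nontrivial direction is to exhibit a real orthogonal $W$ that achieves the unitary minimum. Since $AB^{\tr}\in\R^{n\times n}$, standard linear algebra gives a real singular value decomposition $AB^{\tr}=UDV^{\tr}$ with $U,V\in \OO(n)$ and $D\in \R^{n\times n}$ diagonal with nonnegative entries. Set $W=VU^{\tr}\in \OO(n)$. Then
$$WAB^{\tr}=VU^{\tr}UDV^{\tr}=VDV^{\tr},$$
which is real symmetric and positive semidefinite, and $\trace(WAB^{\tr})=\trace(D)=\|AB^{\tr}\|_\star$ (the singular values of $AB^{\tr}$ viewed as a complex matrix are the same as its real singular values).

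Next I would repeat the calculation from~(\ref{eq:WAB}) with transpose in place of conjugate transpose. For this particular real $W$, every ``$\real$'' becomes trivial and the inequality becomes equality:
$$\|WA-B\|^2=\|A\|^2+\|B\|^2-2\trace(WAB^{\tr})=\|A\|^2+\|B\|^2-2\|AB^{\tr}\|_\star=d_{\U(n)}(A,B)^2,$$
where the last equality is Lemma~\ref{lem:dU}. Hence $d_{\OO(n)}(A,B)\leq \|WA-B\|=d_{\U(n)}(A,B)$, and combined with the opposite inequality above we conclude $d_{\OO(n)}(A,B)=\|WA-B\|=d_{\U(n)}(A,B)$, with $WAB^{\tr}$ symmetric and positive semidefinite as required.

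There is no real obstacle here; the only subtlety worth mentioning is that one must invoke the existence of a \emph{real} SVD (not merely a complex one) so that the minimizer $W$ constructed lies in $\OO(n)$ rather than just $\U(n)$. Everything else is a bookkeeping repetition of the argument in Lemma~\ref{lem:dU}.
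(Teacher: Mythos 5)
Your proposal is correct and follows essentially the same route as the paper: take a real singular value decomposition of $AB^{\tr}$, observe that the resulting minimizer $W$ from the proof of Lemma~\ref{lem:dU} is then real orthogonal, and combine this with the trivial inclusion $\OO(n)\subseteq\U(n)$ to force equality of the two distances. (A small remark: the paper's proof writes $W=UV^{\tr}$, but your $W=VU^{\tr}$ is the one consistent with Lemma~\ref{lem:dU} and with the claim that $WAB^{\tr}$ is symmetric positive semidefinite, so you have quietly corrected a typo.)
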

\begin{proof}
In the proof of Lemma~\ref{lem:dU} we can take the singular value decomposition $AB^\star=UDV^\star=UDV^\tr$
such that $U$ and $V$ are real orthogonal. Then $W=UV^\tr\in \R^{n\times n}$ is real and orthogonal. So we have $d_{\U(n)}(A,B)=\min_{Z\in \U(n)}\|ZA-B\|=\min_{Z\in \OO(n)}\|ZA-B\|=d_{\OO(n)}(A,B)$.
\end{proof}

\subsection{The pseudometrics $d_{\E(n)}$ and $d_{\FF(n)}$}

The group $\E(n)=\OO(n)\ltimes \R^n$ is a semi-direct product of the orthogonal group $\OO(n)$ and the group $(\R^n,+)$ of translations.
We also have a semi-direct product $\FF(n)=\U(n)\ltimes \C^n$.
We define $\pi_\C:\C^{n\times \ell}\to \C^{n\times \ell}$ by
$$
\pi_\C\left(\begin{bmatrix}
    a_1 & a_2 & \cdots & a_\ell\end{bmatrix}\right)=\begin{bmatrix}
        a_1-\overline{a},a_2-\overline{a},\cdots,a_{\ell}-\overline{a}
        \end{bmatrix},
$$
where $\overline{a}=\frac{1}{\ell}(a_1+a_2+\cdots+a_\ell)$ is the average. We let $\pi:\R^{n\times \ell}\to \R^{n\times \ell}$ be the restriction of $\pi_\C$.
\begin{lemma}\label{lem:9}
    For all $A,B\in \C^{n\times \ell}$ we have
    $d_{\C^n}(A,B)=d(\pi_\C(A),\pi_\C(B))$. 
    Also, for all $A,B\in \R^{n\times \ell}$ we have
    $d_{\R^n}(A,B)=d(\pi(A),\pi(B))$. 
\end{lemma}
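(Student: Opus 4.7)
The plan is to reduce the minimization defining $d_{\C^n}(A,B)$ to an explicit quadratic in the translation vector $q \in \C^n$ and evaluate it at its minimizer. Write $A = [a_1,\dots,a_\ell]$, $B = [b_1,\dots,b_\ell]$, let $\overline{a},\overline{b}$ be the column means, and set $c_i = a_i - \overline{a}$, $d_i = b_i - \overline{b}$, so that $\pi_\C(A)$ has columns $c_i$ and $\pi_\C(B)$ has columns $d_i$. The translation action is $q\cdot A = A + q\mathbf{1}^\tr$, so
\begin{equation*}
\|q\cdot A - B\|^2 = \sum_{i=1}^\ell \|a_i + q - b_i\|^2 = \sum_{i=1}^\ell \bigl\|(c_i - d_i) + (\overline{a} - \overline{b} + q)\bigr\|^2.
\end{equation*}

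The key observation is that $\sum_i (c_i - d_i) = 0$ by construction of the centering map, so when expanding the square the cross terms $2\real\langle c_i-d_i,\,\overline{a}-\overline{b}+q\rangle$ sum to zero across $i$. This gives the clean decomposition
\begin{equation*}
\|q\cdot A - B\|^2 = \sum_{i=1}^\ell \|c_i - d_i\|^2 + \ell\,\|\overline{a}-\overline{b}+q\|^2 = \|\pi_\C(A)-\pi_\C(B)\|^2 + \ell\,\|\overline{a}-\overline{b}+q\|^2,
\end{equation*}
which is minimized (uniquely) at $q = \overline{b}-\overline{a}$, yielding $d_{\C^n}(A,B)^2 = \|\pi_\C(A)-\pi_\C(B)\|^2$ as desired.

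The real case then follows immediately by restriction: all computations above stay inside $\R$ when $A,B \in \R^{n\times \ell}$, and the minimizing $q = \overline{b}-\overline{a}$ is real. I do not anticipate a substantive obstacle here; the only point that needs care is verifying that the cross terms truly vanish, which comes down to recognizing that centering produces columns summing to zero, and that the optimal translation is explicitly $\overline{b}-\overline{a}$ rather than merely existing in a limit (so the ``minimum'' in the definition of $d_{\C^n}$ is realized, consistent with Lemma~\ref{lem:minexists} applied to the closed subgroup $\C^n \subset \FF(n)$).
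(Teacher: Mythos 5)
Your proof is correct and is essentially the paper's argument: you expand the square and observe that the cross terms vanish because $\sum_i(c_i-d_i)=0$, which is precisely the same "mean-plus-deviation" decomposition $\sum_i\|p_i\|^2=\ell\|\overline{p}\|^2+\sum_i\|p_i-\overline{p}\|^2$ that the paper invokes directly, and the minimizer $z=\overline{b}-\overline{a}$ is the same. The only cosmetic difference is in the real case, where you note the minimizer is real, while the paper deduces it via a short chain of inequalities sandwiching $d_{\R^n}$ between $d_{\C^n}$ and $d(\pi(A),\pi(B))$; both routes are immediate once the complex case is established.
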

\begin{proof}
For $A=[a_1\ a_2\ \cdots\ a_\ell]$ and $B=[b_1\ b_2\ \cdots\ b_\ell]$ in $\C^{n\times \ell}$, we have
\begin{equation}\label{eq:2}
    d_{\C^n}(A,B)^2=\min_{z\in \C^n}\sum_{i=1}^\ell \|z+a_i-b_i\|^2.
\end{equation}
If we take $p_i=z+a_i-b_i$ in (\ref{eq:2}), then $\overline{p}=z+\overline{a}-\overline{b}$ and 
we get
\begin{multline*}
d_{\C^n}(A,B)^2=\min_{z\in \C^n}\sum_{i=1}^\ell \|p_i\|^2=\min_{z\in \C^n}
\ell \|\overline{p}\|^2+\sum_{i=1}^\ell \|p_i-\overline{p}\|^2=
\min_{z\in \C^n}\ell\|z+\overline{a}-\overline{b}\|^2+ \sum_{i=1}^\ell \|(a_i-\overline{a})-(b_i-\overline{b})\|^2=\\= 
\sum_{i=1}^\ell \|(a_i-\overline{a})-(b_i-\overline{b})\|^2=d(\pi_\C(A),\pi_\C(B))^2.
\end{multline*}
If $A,B\in \R^{n\times \ell}$ then we may view $A$ and $B$ as elements of $\C^{n\times \ell}$. We get
$$
d_{\R^n}(A,B)\leq d(\pi(A),\pi(B))=
d(\pi_\C(A),\pi_\C(B))=d_{\C^n}(A,B)\leq d_{\R^n}(A,B),
$$
so all the inequalities are equalities.
\end{proof}
\begin{lemma}\label{lem:10}
For $A,B\in \C^{n\times \ell}$ we have $d_{\FF(n)}(A,B)=d_{\U(n)}(\pi_\C(A),\pi_\C(B))$. Also, if $A,B\in \R^{n\times \ell}$ we have $d_{\E(n)}(A,B)=d_{\OO(n)}(\pi(A),\pi(B))$.
\end{lemma}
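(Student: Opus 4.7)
The plan is to decompose the minimization over $\FF(n)=\U(n)\ltimes \C^n$ as an inner minimization over pure translations followed by an outer minimization over the unitary part, and then invoke Lemma~\ref{lem:9} to convert the inner minimum into a centering operation $\pi_\C$.

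More precisely, writing an element of $\FF(n)$ as a pair $(P,q)$ with $P\in\U(n)$, $q\in\C^n$, the action on $A=[a_1\ a_2\ \cdots\ a_\ell]\in\C^{n\times\ell}$ is $(P,q)\cdot A=PA+q\mathbf{1}^\tr$, where $\mathbf{1}\in\C^\ell$ is the all-ones vector. Hence
\begin{equation*}
d_{\FF(n)}(A,B)^2=\min_{P\in\U(n)}\ \min_{q\in\C^n}\|PA+q\mathbf{1}^\tr-B\|^2.
\end{equation*}
For fixed $P$, the inner minimum is exactly the squared $\C^n$-orbit distance $d_{\C^n}(PA,B)^2$ as defined by the translation action. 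By Lemma~\ref{lem:9} this equals $d(\pi_\C(PA),\pi_\C(B))^2$.

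The key observation I would then use is that $\pi_\C$ commutes with the $\U(n)$-action: since $P$ is linear, the average of the columns of $PA$ is $P\overline{a}$, so the $i$-th column of $\pi_\C(PA)$ equals $Pa_i-P\overline{a}=P(a_i-\overline{a})$, giving $\pi_\C(PA)=P\pi_\C(A)$. Substituting back,
\begin{equation*}
d_{\FF(n)}(A,B)^2=\min_{P\in\U(n)}\|P\pi_\C(A)-\pi_\C(B)\|^2=d_{\U(n)}(\pi_\C(A),\pi_\C(B))^2,
\end{equation*}
which is the first claim. The real statement follows by the identical argument with $P$ ranging over $\OO(n)$ and $q$ over $\R^n$, using the real part of Lemma~\ref{lem:9}; alternatively, since $\pi(A),\pi(B)$ are real, Corollary~\ref{cor:dOisdU} lets one pass from $d_{\U(n)}$ to $d_{\OO(n)}$ directly.

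There is no real obstacle here — the only point that requires a quick check is the equivariance $\pi_\C\circ P=P\circ\pi_\C$ (i.e.\ that the centering projection is linear and $\U(n)$-equivariant), and the decomposition of the semi-direct product minimization into inner translation / outer rotation steps, both of which are routine once set up. The existence of minimizers at each stage is guaranteed by Lemma~\ref{lem:minexists} applied to the closed subgroups $\C^n,\U(n)\subset\FF(n)$ (respectively $\R^n,\OO(n)\subset\E(n)$).
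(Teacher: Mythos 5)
Your proof is correct. You take a slightly different route from the paper: you decompose the minimization over $\FF(n)=\U(n)\ltimes\C^n$ as a nested minimum, first over the translation part $q\in\C^n$ and then over $P\in\U(n)$, and convert the inner minimum into $\pi_\C$ by Lemma~\ref{lem:9}; the equivariance $\pi_\C(PA)=P\pi_\C(A)$ then collapses the outer minimum to $d_{\U(n)}(\pi_\C(A),\pi_\C(B))$. The paper instead argues by a two-sided squeeze: it first observes $d_{\FF(n)}(A,B)=d_{\FF(n)}(\pi_\C(A),\pi_\C(B))\le d_{\U(n)}(\pi_\C(A),\pi_\C(B))$ (since $A$ and $\pi_\C(A)$ are in the same $\FF(n)$-orbit and $\U(n)\subset\FF(n)$), then picks an optimal $g=(P,q)\in\FF(n)$ via Lemma~\ref{lem:minexists}, uses the same equivariance $\pi_\C(g\cdot A)=P\cdot\pi_\C(A)$, and the $1$-Lipschitz property of $\pi_\C$ to get the reverse inequality. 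Both arguments hinge on the same two facts --- Lemma~\ref{lem:9} and the $\U(n)$-equivariance of $\pi_\C$ --- but your nested-minimization computation gives the equality in one pass, which is arguably cleaner, while the paper's version avoids explicitly justifying the interchange/decomposition of the double minimum (though that step is routine here, as you note, since the semidirect product gives a global product parametrization and each inner minimum is a strictly convex quadratic in $q$). Your alternative for the real case via Corollary~\ref{cor:dOisdU} is in fact closer to what the paper does there.
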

\begin{proof}
We have 
$$
d_{\FF(n)}(A,B)=d_{\FF(n)}(\pi_\C(A),\pi_\C(B))\leq
d_{\U(n)}(\pi_\C(A),\pi_\C(B)).
$$
There exists an element $g=(P,q)\in \FF(n)$ with
$d_{\FF(n)}(A,B)=d(g\cdot A,B)$.
Note that 
$$\pi_\C(g\cdot A)=\pi_\C(q\cdot (P\cdot A))=\pi_\C(P\cdot A)=P\cdot \pi_\C(A).
$$
So we have
$$
d_{\U(n)}(\pi_\C(A),\pi_\C(B))\leq d(P\cdot \pi_\C(A),\pi_\C(B))=d(\pi_\C(g\cdot A),\pi_\C(B))\leq
d(g\cdot A,B)=d_{\FF(n)}(A,B).
$$
This proves that $d_{\U(n)}(\pi_\C(A),\pi_\C(B))=d_{\FF(n)}(A,B)$.

Suppose $A,B\in \R^{n\times \ell}$. Note again that $\pi(A)$ and $A$ lie in the same $\E(n)$-orbit, and the same holds for $\pi(B)$ and $B$, so $d_{\E(n)}(A,B)=d_{\E(n)}(\pi(A),\pi(B))$. 
By Corollary~\ref{cor:dOisdU} and Lemma~\ref{lem:9} we get 
\begin{multline*}
 d_{\OO(n)}(\pi(A),\pi(B))=
d_{\U(n)}(\pi_\C(A),\pi_\C(B))=\\=d_{\FF(n)}(A,B)\leq d_{\E(n)}(A,B)=d_{\E(n)}(\pi(A),\pi(B))\leq d_{\OO(n)}(\pi(A),\pi(B)).
\end{multline*}
All the inequalities are equalities, so $d_{\E(n)}(A,B)=d_{\OO(n)}(\pi(A),\pi(B))$.

\end{proof}

\section{Proofs of the main results}

\subsection{Unitary and orthogonal group actions}
For a  positive semi-definite hermitian matrix $B\in \C^{\ell\times \ell}$, let $\sqrt{B}$
be the unique positive semi-definite matrix whose square is $B$. We define
$\phi_\C:\C^{n\times \ell}\to \C^{\ell\times \ell}$ by $\phi_\C(A)=\sqrt{A^\star A}$.
The image is contained in the set of positive semi-definite hermitian matrices. Let $\phi:\R^{n\times \ell}\to \R^{\ell\times \ell}$ be the restriction of $\phi_\C$.
\begin{proof}[Proof of Theorem~\ref{theo:5}]

% \begin{proposition}\label{prop:UbiLipschitz}
% The map $\phi_\C$ is $\U(k)$-bi-Lipschitz. For all $A,B\in \C^{k\times \ell}$ we have
% $$
%  d_{\U(k)}(A,B)\leq \|\phi_\C(A)-\phi_\C(B)\|\leq \sqrt{2}d_{\U(k)}(A,B).
% $$
% \end{proposition}
% \begin{proof}
Suppose that $A,B\in \C^{n\times \ell}$.
Let $A=U_1D_1V_1^\star$, $B=U_2D_2V_2^\star$
be the singular value decompositions,
where $U_i\in \C^{n\times r}$, $V_i\in \C^{n\times r}$,
$r=\min\{n,\ell\}$ such that
$U_i^\star U_i={\bf 1}_n$
and $V_i^\star V_i={\bf 1}_\ell$ for $i=1,2$.
We have
\begin{multline*}
2\|A-B\|^2-\|\sqrt{A^\star A}-\sqrt{B^\star B}\|^2-
\|\sqrt{AA^\star}-\sqrt{BB^\star}\|^2=2\trace\big(A^\star A+B^\star B-A^\star B-B^\star A)-\\
\trace(A^\star A+B^\star B-2\sqrt{A^\star A}\sqrt{B^\star B})-
\trace(AA^\star+BB^\star-2\sqrt{AA^\star}\sqrt{BB^\star})=\\
=2\big(\trace\big(\sqrt{A^\star A}\sqrt{B^\star B})+\trace(\sqrt{AA^\star}\sqrt{BB^\star})-\trace(A^\star B+B^\star A)\big)=\\
=2\big(\trace(V_1D_1V_1^\star V_2D_2V_2^\star)+
\trace(U_1D_1U_1^\star U_2D_2U_2^\star)-
\trace(V_1D_1U_1^\star U_2D_2V_2^\star+V_2D_2U_2^\star U_1D_1V_1^\star)\Big)=\\
2\trace\big(V_2^\star V_1D_1U_1^\star U_2D_2+U_2^\star U_1D_1U_1^\star U_2D_2-V_2^\star V_1 D_1U_1^\star U_2D_2-U_2^\star U_1D_1V_1^\star V_2D_2\big)=\\
2\trace\big((V_2^\star V_1-U_2^\star U_1)D_1(V_1^\star V_2-U_1^\star U_2)D_2\big)=2\trace(C^\star D_1 CD_2)\geq 0,
\end{multline*}
where $C=V_1^\star V_2-U_1^\star U_2$. To see the last inequality,
note that $C^\star D_1 C$ and $D_2$ are positive semi-definite hermitian matrices and that the trace of a product of two positive semi-definite matrices is nonnegative. It now follows that
$$
2\|A-B\|^2-\|\phi_\C(A)-\phi_\C(B)\|^2=2\|A-B\|^2-\|\sqrt{A^\star A}-\sqrt{B^\star B}\|^2\geq \|\sqrt{AA^\star}-\sqrt{BB^\star}\|^2\geq 0
$$
and 
$$
\|\phi_\C(A)-\phi_\C(B)\|\leq \sqrt{2}\|A-B\|.
$$
There exists a unitary matrix $W\in \U(n)$
with $d_{\U(n)}(A,B)=\|WA-B\|$. Then we get
$$
\|\phi_\C(A)-\phi_\C(B)\|=\|\phi_\C(WA)-\phi_\C(B)\|
\leq \sqrt{2}\|WA-B\|=\sqrt{2}d_{\U(n)}(A,B).
$$
Define $n\times n$ matrices $W_1,W_2$ as follows:
\begin{enumerate}
\item If $n\leq \ell$, then $r=n$. We extend $V_1,V_2\in \C^{\ell\times n}$
to unitary $\ell\times \ell$ matrices $[V_1\ \widetilde{V}_1]$ and $[V_2\ \widetilde{V}_2]$ respectively.
Define $W_1=U_2^\star (V_1V_2^\star+\widetilde{V}_1\widetilde{V}_2^\star)U_1$
and $W_2=U_2^\star (V_1V_2^\star-\widetilde{V}_1\widetilde{V}_2^\star)U_1$.
\item If $n\geq \ell$, then $r=\ell$. We extend $U_1,U_2\in \C^{n\times \ell}$
to unitary $n\times n$ matrices $[U_1\ \widetilde{U}_1]$ and $[U_2\ \widetilde{U}_2]$ respectively.
Define $W_1=U_2^\star V_1V_2^\star U_1+\widetilde{U}_2^\star V_1V_2^\star \widetilde{U}_1$
and $W_2=U_2^\star V_1V_2^\star U_1-\widetilde{U}_2^\star V_1V_2^\star \widetilde{U}_1$.
\end{enumerate}
In both cases, $W_1$ and $W_2$ are unitary, and $W_1+W_2=2U_2^\star V_1V_2^\star U_1$.
We have
\begin{multline*}
\trace(W_1AB^\star)+\trace(W_2AB^\star)=\trace((W_1+W_2)AB^\star)=
2\trace((U_2V_2^\star V_1 U_1^\star) AB^\star)=\\
=2\trace\big((U_2V_2^\star V_1 U_1^\star)(U_1D_1V_1^\star V_2D_2U_2^\star)\big)=2\trace( V_1 (U_1^\star U_1)D_1 V_1^\star V_2 D_2 (U_2^\star U_2)V_2^\star)=\\2\trace\big((V_1^\star D_1 V_1^\star)( V_2 D_2 V_2^\star)\big)=2\trace\big(\sqrt{A^\star A}\sqrt{B^\star B}\big).
\end{multline*}
For some choice $i\in \{1,2\}$ we have
$$
\real(\trace(W_iAB^\star))\geq \trace\big(\sqrt{A^\star A}\sqrt{B^\star B}\big).
$$
We get
\begin{multline*}
\|W_iA-B\|^2=\trace(AA^\star+BB^\star)-2\real(\trace(W_iAB^\star))\leq\\
\trace(AA^\star+BB^\star)-2\trace(\sqrt{A^\star A}\sqrt{B^\star B})=
\big\|\sqrt{A^\star A}-\sqrt{B^\star B}\big\|^2.
\end{multline*}
This shows that $\|W_iA-B\|\leq \|\phi_\C(A)-\phi_\C(B)\|$
and therefore $d_{\U(n)}(A,B)\leq \|\phi_\C(A)-\phi_\C(B)\|$.

The image of $\phi_\C$
is contained in the space $H^2(\C^\ell)$ of hermitian matrices of size $\ell\times \ell$. This space has real dimension $\ell^2$. By choosing an orthonormal basis of $H^2(\C^\ell)$ we can view $\phi_\C$ as a map from $\C^{n\times \ell}\to \R^{\ell^2}$.
\end{proof}

\begin{proof}[Proof of Theorem~\ref{theo:3}]
% \begin{corollary}
% The map $\phi_{\R}$ is $\OO(k)$-bi-Lipschitz. For all $A,B\in \R^{k\times \ell}$ we have
% $$
%  d_{\OO(k)}(A,B)\leq \|\phi_\R(A)-\phi_\R(B)\|\leq \sqrt{2}d_{\OO(k)}(A,B).
% $$
% \end{corollary}
% \begin{proof}
From Theorem~\ref{theo:5} and Corollary~\ref{cor:dOisdU} follows that
$$
d_{\OO(n)}(A,B)=d_{\U(n)}(A,B)\leq \|\phi_\C(A)-\phi_\C(B)\|=\|\phi(A)-\phi(B)\|\leq \sqrt{2}d_{\U(n)}(A,B)=
 d_{\OO(n)}(A,B).
$$
We make the simple observation that the image of $\phi$ is contained in the space $S^2(\R^\ell)$ of symmetric $\ell\times \ell$ matrices. This space has dimension ${\ell+1\choose 2}$. By choosing an orthonormal basis of $S^2(\R^\ell)$ we can view $\phi$ as a function from $\R^{n\times \ell}$ to $\R^{\ell(\ell+1)/2}$.
\end{proof}

\subsection{Euclidean group actions}

\begin{proof}[Proof of Theorem~\ref{theo:4}]
Suppose that $A,B\in \R^{n\times \ell}$.
Using Theorem~\ref{theo:3} and Lemma~\ref{lem:10} we have
\begin{multline*}
d_{\E(n)}(A,B)=d_{\OO(n)}(\pi(A),\pi(B))\leq
d(\phi\circ \pi(A),\phi\circ \pi (B))=d(\psi(A),\psi(B))\leq \\ 
\sqrt{2} d_{\OO(n)}(\pi(A),\pi(B))=\sqrt{2} d_{\E(n)}(A,B).
\end{multline*}
Let ${\bf 1}=[1\ 1\ \cdots\ 1]^\tr$.
Suppose $a_1,a_2,\dots,a_\ell\in \R^n$ and let $A=[a_1\ a_2\ \cdots\ a_\ell]$. 
We have
$\psi(A)=\sqrt{B^\tr B}$,
where
$$
B=\pi(A)=\begin{bmatrix} a_1-\overline{a},a_2-\overline{a},\dots,a_\ell-\overline{a}\end{bmatrix}.
$$
We have $B{\bf 1}=\sum_{i=1}^\ell (a_i-\overline{a})=0$. This implies that $\psi(A){\bf 1}=0$ and ${\bf 1}^\tr \psi(A)=0$. We choose a orthonormal basis
$w_1,w_2,\dots,w_n$ with $w_n=\frac{1}{n}{\bf 1}$
and we set $W=[w_1\ w_2\ \cdots\ w_n]$.
Then $W$ is an orthogonal matrix.
Now $W\psi(A)W^\tr$ is of the form
$$
\begin{bmatrix}
    \widetilde{\psi}(A) & \\ & 0
\end{bmatrix}
$$
where $\widetilde{\psi}(A)$ is a symmetric $(\ell-1)\times (\ell-1)$ matrix. By choosing an orthonormal basis of the space of symmetric $(\ell-1)\times (\ell-1)$ matrices, we can view $\psi$
as a map from $\R^{n\times \ell}$ to $\R^{\ell(\ell-2)/2}$.
\end{proof}
\begin{proof}[Proof of Theorem~\ref{theo:6}]
Suppose, $A,B\in \C^{n\times \ell}$.
Using Theorem~\ref{theo:5} and Lemma~\ref{lem:10} we have
\begin{multline*}
d_{\FF(n)}(A,B)=d_{\U(n)}(\pi_\C(A),\pi_\C(B))\leq
d(\phi_\C\circ \pi_\C(A),\phi_\C\circ \pi_\C(B))=d(\psi_\C(A),\psi_\C(B))\leq \\
\sqrt{2} d_{\U(n)}(\pi_\C(A),\pi_\C(B))=\sqrt{2} d_{\FF(n)}(A,B).
\end{multline*}
After a base change in $\C^{\ell\times \ell}$ as in the proof of Theorem~\ref{theo:5}, we get that $\psi_\C(A)$
is of the form
$$
\begin{bmatrix}
    \widetilde{\psi}_\C(A) & \\ & 0
\end{bmatrix}
$$
where $\widetilde{\psi}_\C(A)$ is a hermitian $(\ell-1)\times (\ell-1)$ matrix. So the image of $\psi_\C$ is
contained in a subspace of real dimension $(\ell-1)^2$.
\end{proof}
\section{Dimension reduction}

\subsection{Bi-Lipschitz embeddings of matrices of bounded rank}
Let $X_r\subseteq \C^{\ell\times \ell}$ be set of matrices of rank~$\leq r$. This is an algebraic variety of dimension $r(2\ell-r)$ and codimension $\ell^2-r(2\ell-r)=(\ell-r)^2$.
There exists a subspace $W_r$ of dimension 
$(\ell-r)^2$ with $W_r\cap X_r=\{0\}$. In fact, any subspace $W_r$ of dimension $(\ell-r)^2$ in general position will do. We will discribe an explicit choice of $W_r$ later.
\begin{lemma}
    There exists a constant $D<1$ with
    $|\langle A,B\rangle|\leq D$ for all $A\in X_r$ and $B\in W_r$.
\end{lemma}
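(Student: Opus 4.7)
The plan is a compactness argument on the unit spheres of $X_r$ and $W_r$. (Literally as stated, the conclusion is homogeneous-of-degree-2 versus a constant bound, so I read it with the standard normalization $\|A\|=\|B\|=1$, equivalently $|\langle A,B\rangle|\le D\|A\|\|B\|$ for all $A\in X_r$, $B\in W_r$.) Set
$$
S_X=\{A\in X_r:\|A\|=1\},\qquad S_W=\{B\in W_r:\|B\|=1\}.
$$
The variety $X_r$ is cut out by the vanishing of all $(r{+}1)\times(r{+}1)$ minors, hence Zariski-closed and therefore closed in the Euclidean topology; $W_r$ is a linear subspace, hence closed. Intersecting each with the compact unit sphere of $\C^{\ell\times\ell}\cong\R^{2\ell^2}$ gives compact sets, so $S_X\times S_W$ is compact. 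The function $(A,B)\mapsto|\langle A,B\rangle|$ is continuous, and therefore attains its supremum
$$
D:=\max_{(A,B)\in S_X\times S_W}|\langle A,B\rangle|
$$
at some pair $(A_0,B_0)$.

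The main step is ruling out $D=1$. Suppose, for contradiction, that $D=1$. Then $|\langle A_0,B_0\rangle|=\|A_0\|\|B_0\|$, and the equality case of Cauchy--Schwarz forces $A_0=\lambda B_0$ for some scalar $\lambda\in\C$ with $|\lambda|=1$. Since $W_r$ is a linear subspace and $B_0\in W_r$, we get $A_0=\lambda B_0\in W_r$, so $A_0\in X_r\cap W_r=\{0\}$, contradicting $\|A_0\|=1$. Hence $D<1$. The general statement follows from homogeneity: $X_r$ is a cone (closed under complex scaling) and $W_r$ is a subspace, so for nonzero $A\in X_r$, $B\in W_r$, applying the sphere bound to $A/\|A\|$, $B/\|B\|$ yields $|\langle A,B\rangle|\le D\|A\|\|B\|$.

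There is no real obstacle: the closedness of $X_r$ is standard algebraic geometry, and the rest is Cauchy--Schwarz plus the hypothesis $X_r\cap W_r=\{0\}$. The one thing to note is that this argument gives no explicit value for $D$; a quantitative bound would require analyzing the specific choice of $W_r$ promised later in the paper.
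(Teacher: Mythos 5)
Your proof is correct and follows essentially the same route as the paper: compactness of the product of unit spheres, continuity of $|\langle\cdot,\cdot\rangle|$, and the equality case of Cauchy--Schwarz combined with $X_r\cap W_r=\{0\}$ to rule out $D=1$. In fact your treatment is slightly more careful than the paper's on two small points---you correctly get $A_0=\lambda B_0$ with $|\lambda|=1$ in the complex Cauchy--Schwarz equality case (the paper writes $A=\pm B$, which is the real version), and you flag the implicit normalization $\|A\|=\|B\|=1$ needed for the lemma statement to be homogeneity-consistent.
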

\begin{proof}
The function $|\langle A, B\rangle|$ on the set
 compact set
$$
\{(A,B)\mid A\in W_r,B\in X_r,\|A\|=\|B\|=1\}
$$
has a maximum $D$. Take $A\in W_r$ and $B\in X_r$ with $\|A\|=\|B\|=1$
with $D=|\langle A,B\rangle|$. By Cauchy-Schwarz, $D\leq \|A\|\|B\|=1$.
Suppose that $D=1$. Then $A=\pm B\in W_r\cap X_r$. This is a contradiction, so $D<1$.
\end{proof}
Set $C=\sqrt{1-D^2}>0$.
Let $p_r: X_r\to W_r^\perp$ be the projection onto
the orthogonal complement 
$$
W_r^\perp=\{B\in \C^{\ell\times \ell}\mid \forall A\in W_r\ \langle A,B\rangle=0\}.
$$
\begin{lemma}
    For any matrix $A\in X_r$ we have
    $$
    C\|A\|\leq \|\rho(A)\|\leq \|A\|.
    $$
\end{lemma}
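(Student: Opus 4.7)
The plan is to exploit the near-orthogonality between $X_r$ and $W_r$ guaranteed by the previous lemma, by decomposing $A$ along $W_r$ and its orthogonal complement and bounding the $W_r$-component.

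First I would note that $\rho(A)$ should be read as $p_r(A)$, and write the orthogonal decomposition $A = p_r(A) + q_r(A)$, where $q_r(A) \in W_r$ is the orthogonal projection onto $W_r$. By the Pythagorean identity,
\begin{equation*}
\|A\|^2 = \|p_r(A)\|^2 + \|q_r(A)\|^2.
\end{equation*}
The upper bound $\|p_r(A)\| \leq \|A\|$ is then immediate, since $\|q_r(A)\|^2 \geq 0$.

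For the lower bound, the key step is to bound $\|q_r(A)\|$ away from $\|A\|$ using the preceding lemma. Since $q_r(A) \in W_r$ and $A \in X_r$, if $q_r(A) \neq 0$ then normalizing both vectors gives $A/\|A\| \in X_r$ and $q_r(A)/\|q_r(A)\| \in W_r$ of unit norm, so by the previous lemma
\begin{equation*}
\frac{|\langle A, q_r(A)\rangle|}{\|A\|\,\|q_r(A)\|} \leq D.
\end{equation*}
But $\langle A, q_r(A)\rangle = \langle p_r(A) + q_r(A), q_r(A)\rangle = \|q_r(A)\|^2$, since $p_r(A) \perp q_r(A)$. Therefore $\|q_r(A)\|^2 \leq D \|A\|\, \|q_r(A)\|$, giving $\|q_r(A)\| \leq D\|A\|$ (trivially also true if $q_r(A)=0$).

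Substituting back into the Pythagorean identity yields
\begin{equation*}
\|p_r(A)\|^2 = \|A\|^2 - \|q_r(A)\|^2 \geq (1-D^2)\|A\|^2 = C^2\|A\|^2,
\end{equation*}
which gives $\|p_r(A)\| \geq C\|A\|$, as desired. I do not expect any serious obstacle here; the only subtle point is the one-line observation that $\langle A, q_r(A)\rangle$ equals $\|q_r(A)\|^2$ so that the previous lemma can be applied in a form that produces the constant $C = \sqrt{1-D^2}$.
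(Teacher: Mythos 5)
Your proof is correct and takes essentially the same approach as the paper: decompose $A$ orthogonally as (component in $W_r$) plus (component in $W_r^\perp$), bound the $W_r$-component by $D\|A\|$ via the inner-product estimate from the preceding lemma, and conclude by Pythagoras. The only cosmetic difference is that the paper normalizes to $\|A\|=1$ first; also note the paper's displayed inequality has a typo ($C$ where $D$ is meant), which your version states correctly.
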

\begin{proof}
 Because $p_r$ is a linear projection, it suffices to prove the Lemma in the case where $\|A\|=1$.
   Write $A=A_1+A_2$ with $A_1\in W_r$ and $A_2\in W_r^\perp$. Then we have 
   $$
   \|A_1\|^2=\langle A_1,A_1\rangle=\langle A_1,A_1+A_2\rangle=\langle A_1,A\rangle\leq C\|A_1\|,
   $$
   so $\|A_1\|\leq C$ and $\|A_2\|=\sqrt{1-\|A_1\|^2}\geq \sqrt{1-D^2}=C$.   
\end{proof}
Let $\C[x]_{<\ell}$ be the $\ell$-dimensional vector space of polynomials of degree $<\ell$.
By choosing a basis of $\C[x]_{<\ell}$ we can
identify the vector spaces $\C[x]_{<\ell}$ and $\C^{d+1}$
 We can also make identifications between the vector spaces
$$
\C[x,y]_{<\ell,<\ell}\cong \C[x]_{< \ell}\otimes \C[x]_{<\ell}\cong \C^{\ell}\otimes \C^{\ell}\cong \C^{\ell\times \ell}.
$$
Here $\C[x,y]_{< \ell,<m}$ is the space of polynomials in $x$ and $y$
that have degree $<\ell$ in the $x$-variable and degree $< m$ in the $y$-variable.
The rank of an element $A(x,y)\in \C[x,y]_{<\ell,<m}$ is the smallest nonnegative integer for which we can write $A(x,y)=\sum_{i=1}^r f_i(x)g_i(y)$.
We have two partial derivatives $\partial_x:\C[x,y]_{<\ell,<m}\to \C[x,y]_{<\ell-1,<m}$
and $\partial_y:\C[x,y]_{<\ell,<m}\to \C[x,y]_{<\ell,<m-1}$.
Setting $y=x$ gives a linear map $\Theta:\C[x,y]_{<\ell,<m}\to \C[x]_{<\ell+m-1}$.

\begin{definition}
    Define the subspace $W_{r}\subseteq \C[x,y]_{<\ell,<\ell}$ as the set of all elements that lie in the ideal generated by $(x-y)^r$. 
\end{definition}

\begin{lemma}
    We have
    \begin{enumerate}
        \item $\dim W_r=(\ell-r)^2$;
        \item  if $A\in W_r\subseteq \C^{\ell\times \ell}$ then $A^\tr\in W_r$.
        \item if $A\in W_r$ then $\overline{A}\in W_r$ where $\overline{A}$ is the complex conjugate of $A$.
    \end{enumerate}
\end{lemma}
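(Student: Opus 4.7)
My plan is to identify $W_r$ explicitly with the image of multiplication by $(x-y)^r$ on $\C[x,y]_{<\ell-r,<\ell-r}$, and then to deduce all three statements from elementary properties of the generator $(x-y)^r$: the fact that $\C[x,y]$ is a domain, a leading-coefficient bidegree argument, antisymmetry under the swap $x\leftrightarrow y$, and reality of its coefficients.

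For (1), consider the multiplication map $\mu_r:\C[x,y]\to \C[x,y]$ sending $Q\mapsto (x-y)^r Q$; it is injective because $\C[x,y]$ is a domain. I claim $W_r=\mu_r\bigl(\C[x,y]_{<\ell-r,<\ell-r}\bigr)$. The inclusion $\supseteq$ is immediate from checking bidegrees: if $\deg_x Q,\deg_y Q\le \ell-r-1$, then $(x-y)^r Q$ has $x$- and $y$-degree at most $\ell-1$. For $\subseteq$, let $P=(x-y)^r Q\in \C[x,y]_{<\ell,<\ell}$; regarded as a polynomial in $x$ with coefficients in $\C[y]$, the leading $x$-coefficient of $(x-y)^r$ is $1$, so $\deg_x P=r+\deg_x Q$, forcing $\deg_x Q\le \ell-r-1$, and symmetrically $\deg_y Q\le \ell-r-1$. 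Thus $\mu_r$ restricts to a linear bijection $\C[x,y]_{<\ell-r,<\ell-r}\to W_r$, giving $\dim W_r=(\ell-r)^2$.

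For (2), under the identification $A=(A_{ij})\leftrightarrow \sum_{i,j}A_{ij}\, x^i y^j$ coming from the monomial basis of $\C[x]_{<\ell}$, transposition of matrices corresponds exactly to swapping the variables $x$ and $y$. Since $(y-x)^r=(-1)^r(x-y)^r$, the ideal $\bigl((x-y)^r\bigr)$ is stable under $x\leftrightarrow y$, and the bidegree condition $<(\ell,\ell)$ is also preserved by the swap; hence $W_r$ is closed under transposition.

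For (3), complex conjugation on $\C^{\ell\times\ell}$ corresponds to conjugating polynomial coefficients. Since $(x-y)^r$ has real coefficients, if $P=(x-y)^r Q$ then $\overline{P}=(x-y)^r\overline{Q}$, still an element of $W_r$. The only step of any substance is the bidegree computation in (1); parts (2) and (3) are immediate consequences of the symmetries of the generator $(x-y)^r$.
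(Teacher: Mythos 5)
Your proof is correct and follows essentially the same approach as the paper's: realize $W_r$ as the image of multiplication by $(x-y)^r$ on $\C[x,y]_{<\ell-r,<\ell-r}$ (equivalently, give the basis $(x-y)^r x^i y^j$ with $0\le i,j<\ell-r$), identify matrix transposition with the swap $x\leftrightarrow y$, and use the reality of the coefficients of $(x-y)^r$. Your treatment of part (1) is a bit more careful than the paper's terse version: the paper simply asserts that the listed polynomials form a basis, whereas you supply the bidegree/leading-coefficient argument showing that any $(x-y)^r Q\in\C[x,y]_{<\ell,<\ell}$ forces $\deg_x Q,\deg_y Q<\ell-r$, which is exactly what's needed to see those polynomials span $W_r$.
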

\begin{proof}\ 
h
(1) The space $W_r$ has a basis given by the polynomials
     $(x-y)^r x^iy^j$ with $0<i,j<\ell-r$. Since there are $\ell-r$ choices for $i$ and $j$, $\dim W_r=(\ell-r)^2$.

    (2) Mapping $A$ to $A^\tr$ in $\C^{\ell\times \ell}$ corresponds to sending a polynomial $f(x,y)$ to $f(y,x)$ in $\C[x,y]_{<\ell,<ell}$
    and it is clear that $f(x,y)\in W_r$ if and only if $f(y,x)\in W_r$.

    (3) This is clear because $W_r$ has a basis of real polynomials. 
    \end{proof}

\begin{proposition}
    We have $W_r\cap X_r=\{0\}$.
    \end{proposition}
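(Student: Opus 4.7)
The plan is to translate the statement $W_r \cap X_r = \{0\}$ into one about bivariate polynomials via the identification $\C^{\ell\times\ell} \cong \C[x,y]_{<\ell,<\ell}$ set up just above. Under this identification, a matrix of rank $\le r$ corresponds to a polynomial admitting a decomposition $\sum_{i=1}^{r} f_i(x)\,g_i(y)$, while membership in $W_r$ corresponds to divisibility by $(x-y)^r$. The central tool will be the classical Wronskian criterion: linearly independent polynomials in $\C[x]$ have a Wronskian that is not identically zero.

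Concretely, suppose for contradiction that some nonzero $A \in W_r \cap X_r$ exists, and let $r'$ be its actual rank, so $1 \le r' \le r$. By minimality of $r'$ we may write $A(x,y) = \sum_{i=1}^{r'} f_i(x)\, g_i(y)$ with the families $\{f_i\}$ and $\{g_i\}$ each linearly independent. Since $(x-y)^r$ divides $A$ and $r' \le r$, certainly $(x-y)^{r'}$ divides $A$; this divisibility is equivalent, via Taylor expansion in $y$ about the diagonal, to the system $\partial_y^k A(x,y)\big|_{y=x} = 0$ for $k = 0, 1, \dots, r'-1$. Substituting the decomposition, these equations become
\[
\sum_{i=1}^{r'} f_i(x)\, g_i^{(k)}(x) = 0, \qquad k = 0, 1, \dots, r'-1,
\]
i.e., the row vector $(f_1(x), \dots, f_{r'}(x))$ lies in the left kernel of the $r'\times r'$ Wronskian matrix $M(x) = \bigl(g_i^{(k)}(x)\bigr)_{i,k}$ at every $x$.

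Because $g_1, \dots, g_{r'}$ are linearly independent polynomials, $\det M(x)$ is a nonzero polynomial in $x$. At any point $x_0$ with $\det M(x_0) \neq 0$ we conclude $f_1(x_0) = \cdots = f_{r'}(x_0) = 0$; since such $x_0$ form a Zariski-dense open subset of $\C$, each $f_i$ vanishes identically, contradicting their linear independence. Hence no nonzero $A$ exists, as desired.

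The step requiring the most care is the assertion that the Wronskian of linearly independent polynomials does not vanish identically (the polynomial/analytic version rather than the merely $C^\infty$ version, which admits counterexamples). If one prefers a self-contained argument in place of citing this classical fact, one can exploit the linear freedom in the decomposition to choose the $g_i$ with strictly increasing degrees $d_1 < \cdots < d_{r'}$, and then check that the top-degree part of $\det M(x)$ factors as a nonzero constant times a Vandermonde determinant in $d_1, \dots, d_{r'}$, which is nonzero since the $d_i$ are distinct.
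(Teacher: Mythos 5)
Your proof is correct and follows essentially the same Wronskian argument as the paper; the only cosmetic differences are that you differentiate in $y$ and invoke the Wronskian of the $g_i$'s to kill the $f_i$'s (the paper differentiates in $x$ and uses the Wronskian of the $f_i$'s to kill the $g_i$'s), and you work with the exact rank $r'$ rather than a length-$r$ decomposition with linearly independent $f_i$'s. Your optional Vandermonde justification of the Wronskian nonvanishing is a nice self-contained addition, but otherwise this matches the paper's proof.
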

    \begin{proof}

    Suppose $h(x,y)\in W_r$ has rank $\leq r$.
    Then we can write $h(x,y)=\sum_{i=1}^r f_i(x)g_i(y)$
    where $f_1(x),f_2(x),\dots,f_r(x)$ are linearly independent. For $k<r$ the polynomial is 
    $\partial_x^k h(x,y)=\sum_{i=1}^r f_i(x)^{(k)}g_i(y)$
    divisible by  $(y-x)^{r-k}$ and hence by $y-x$.
    Setting $x$ equal to $y$ gives 
   $ \sum_{i=1}^r f_i(y)^{(k)}g_i(x)$ for $k=0,1,\dots,r-1$.
    We can rewrite this as
\begin{equation}\label{eq:Wronski}
    W(x)\begin{bmatrix}
        g_1(x)\\ g_2(x)\\ \vdots \\ g_r(x)
    \end{bmatrix}=\begin{bmatrix}0 \\ 0 \\ \vdots \\ 0\end{bmatrix},
    \end{equation}
    where $W(x)$ is the Wronski matrix
    $$
    \begin{bmatrix}
        f_1(x) & f_2(x) & \cdots & f_r(x)\\
        f_1'(x) & f_2'(x) & \cdots & f_r'(x)\\
        \vdots & \vdots & & \vdots\\
        f_1^{(r-1)}(x) & f_2^{(r-1)}(x) & \cdots & f_r^{(r-1)}(x)
    \end{bmatrix}
    $$
    Since $f_1(x),f_2(x),\dots,f_r(x)$ are linearly independent, the Wronskian $\det(W(x))$ is a nonzero function. So $W(x)$ is invertible, 
    $g_1(x)=g_2(x)=\cdots=g_r(x)=0$ and $h(x,y)=0$.
    \end{proof}

\subsection{Dimension reduction}
Let $W_r\subseteq \C^{\ell\times \ell}$ and $\rho_r:\C^{\ell\times \ell}\to W_r^\perp$ as in the previous section.
\begin{lemma}\label{lem:dim_rho}
Suppose that $\ell\geq 2n>0$.
\begin{enumerate}
    \item The real vector space $\rho_{2n}(S^2(\R^\ell))$ has dimension
    $n(2\ell-2n+1)$.
    \item The real vector space
    $\rho_{2n}(H^2(\C^\ell))$ has dimension
    $4n(\ell-n)$.
    
\end{enumerate}    
\end{lemma}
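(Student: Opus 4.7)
The plan is to observe that $\rho_{2n}$ is a linear projection with kernel $W_{2n}$, so for any real subspace $V\subseteq \C^{\ell\times \ell}$,
\[
\dim_\R \rho_{2n}(V)=\dim_\R V-\dim_\R(V\cap W_{2n}).
\]
Hence each part amounts to computing $\dim_\R(V\cap W_{2n})$ for the appropriate $V$, after which the target dimensions follow by a short simplification.

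For part~(1), I would work in the polynomial model $\C^{\ell\times\ell}\cong \C[x,y]_{<\ell,<\ell}$ used throughout this section. Transposition corresponds to swapping $x\leftrightarrow y$, so $S^2(\R^\ell)$ corresponds to real polynomials satisfying $f(x,y)=f(y,x)$. An element of $W_{2n}$ has the form $(x-y)^{2n}g(x,y)$ with $g\in\C[x,y]_{<\ell-2n,<\ell-2n}$, and because $2n$ is even, $(x-y)^{2n}$ is itself invariant under $x\leftrightarrow y$. Hence such a polynomial lies in $S^2(\R^\ell)$ exactly when $g$ is real and symmetric, giving $\dim_\R(S^2(\R^\ell)\cap W_{2n})=\binom{\ell-2n+1}{2}$. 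Subtracting from $\dim_\R S^2(\R^\ell)=\binom{\ell+1}{2}$ and simplifying yields $n(2\ell-2n+1)$.

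For part~(2), I would exploit the closure of $W_{2n}$ under $A\mapsto A^\tr$ and $A\mapsto \overline{A}$, hence under $A\mapsto A^\star$, established in the preceding lemma. For any complex subspace $W\subseteq \C^{\ell\times \ell}$ closed under $A\mapsto A^\star$, every element decomposes uniquely as $A=H+iK$ with $H=\tfrac{1}{2}(A+A^\star)$ and $K=\tfrac{1}{2i}(A-A^\star)$ Hermitian and, by $\star$-closure, in $W$. Combined with $H^2(\C^\ell)\cap iH^2(\C^\ell)=\{0\}$, this gives
\[
\dim_\R(W\cap H^2(\C^\ell))=\tfrac{1}{2}\dim_\R W=\dim_\C W.
\]
Applied to $W=W_{2n}$, whose complex dimension is $(\ell-2n)^2$, this yields $\dim_\R(W_{2n}\cap H^2(\C^\ell))=(\ell-2n)^2$, and therefore $\dim_\R \rho_{2n}(H^2(\C^\ell))=\ell^2-(\ell-2n)^2=4n(\ell-n)$.

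There is no real obstacle here: the two ingredients are the parity of the exponent $2n$ (so that $(x-y)^{2n}$ is symmetric in $x,y$) for part~(1), and the $\star$-closure of $W_{2n}$ for part~(2). Both follow immediately from the lemma just proved, after which the dimension counts reduce to routine algebra.
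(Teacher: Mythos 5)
Your proposal is correct, and the overall scaffolding, namely $\dim_\R \rho_{2n}(V)=\dim_\R V-\dim_\R(V\cap W_{2n})$, is the same as the paper's (the paper phrases it via the isomorphism $\rho_{2n}(V)\cong V/(V\cap W_{2n})$). For part~(1) the two proofs are essentially identical: you both reduce to counting $\dim_\R(S^2(\R^\ell)\cap W_{2n})=\binom{\ell-2n+1}{2}$; the paper exhibits the explicit basis $(x-y)^{2n}(x+y)^j(xy)^k$ with $j+k<\ell-2n$, while you instead note that the cofactor $g$ of $(x-y)^{2n}$ must itself be real and symmetric in $\C[x,y]_{<\ell-2n,<\ell-2n}$, identifying the intersection with $S^2(\R^{\ell-2n})$. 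For part~(2) your route is genuinely different: the paper again tries to write down an explicit real basis for $W_{2n}\cap H^2(\C^\ell)$, whereas you observe that $W_{2n}$ is a complex $\star$-closed subspace and decompose each $A\in W_{2n}$ into its Hermitian and skew-Hermitian-over-$i$ parts, both of which stay in $W_{2n}$, giving $\dim_\R(W_{2n}\cap H^2(\C^\ell))=\dim_\C W_{2n}=(\ell-2n)^2$ with no basis at all. Your argument is cleaner here and more robust; in fact the explicit basis printed in the paper, $(x-y)^{2n}(x+y)^j(x-y)^k i^k$ with $0\leq j,k<\ell-2n$, includes elements of $x$-degree $2n+j+k\geq\ell$ as soon as $\ell\geq 2n+2$, so it cannot literally be a basis of a subspace of $\C[x,y]_{<\ell,<\ell}$ as stated; the abstract $\star$-closure argument sidesteps that issue entirely while giving the same count.
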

\begin{proof}\ 

(1) The space $\rho_{2n}(S^2(\R^\ell))$ is isomorphic
to $S^2(\R^\ell)/(W_{2n}\cap S^2(\R^\ell))$.
A basis of the space $W_{2n}\cap S^2(\R^\ell))$
is $(x-y)^{2n}(x+y)^j(xy)^k$ with $j+k<\ell-2n$,
so this space has dimension ${\ell+1-2n\choose 2}$.
So  the dimension of $S^2(\R^\ell)/(W_{2n}\cap S^2(\R^\ell))$ is equal to ${\ell+1\choose 2}-{\ell+1-2n\choose 2}=n(2\ell-2n+1)$.

(2)  The space $\rho_{2n}(H^2(\C^\ell))$ is isomorphic
to $H^2(\C^\ell)/(W_{2n}\cap H^2(\C^\ell))$.
A basis of the real vector space $W_{2n}\cap H^2(\C^\ell))$
is $(x-y)^{2n}(x+y)^j(x-y)^ki^k$ with $0\leq j,k<\ell-2n$,
so this space has dimension $(\ell-2n)^2$. So the dimension of $H^2(\C^\ell)/(W_{2n}\cap H^2(\C^\ell))$
is $\ell^2-(\ell-2n)^2=4n\ell(\ell-n)$.

\end{proof}

\begin{proof}[Proof of Theorem~\ref{theo:bi-Lipschitz}] \ 

(1) Consider the composition $\phi'=\rho_{2n}\circ\phi$, where $\phi:\R^{n\times \ell}\to S^2(\R^\ell)\subseteq \R^{\ell\times \ell}\subseteq \C^{\ell\times \ell}$.
 For $A,B\in \R^{n\times \ell}$ we have
$$
\|\phi'(A)-\phi'(B)\|=\|\rho_{2n}\circ\phi(A)-\rho_{2n}\circ\phi(B)\|=\|\rho_{2n}(\phi(A)-\phi(B))\|\geq C\|\phi(A)-\phi(B)\|\geq Cd_{\OO(n)}(A,B),
$$
because $\phi(A),\phi(B)\in X_{n}$ and $\phi(A)-\phi(B)\in X_{2n}$. We also have
$$
\|\phi'(A)-\phi'(B)\|=\|\rho_{2n}(\phi(A)-\phi(B))\|\leq
\|\phi(A)-\phi(B)\|\leq \sqrt{2}d_{\OO(n)}(A,B).
$$
This proves that $\phi'$ is $\OO(n)$-bi-Lipschitz.

The image is contained in $\rho_{2n}(S^2(\R^\ell))$
which has dimension $n(2\ell-2n+1)$ by Lemma~\ref{lem:dim_rho}.

(2) We can view $\psi$ as a map from $\R^{n\times \ell}$
to $S^2(\R^{\ell-1})\subseteq \R^{(\ell-1)\times (\ell-1)}\subseteq \C^{(\ell-1)\times (\ell-1)}$. Define $\psi'=\rho_{2n}\circ \psi$. A similar argument as in part (1) shows that $\psi'$ is $\E(n)$-bi-Lipschitz.
The image of $\psi'$ is contained
in the space $\rho_{2n}(S^2(\C^{\ell-1}))$ which has dimension $n(2\ell-2n-1)$ by Lemma~\ref{lem:dim_rho}
(where we replace $\ell$ by $\ell-1$).

(3) Consider the composition $\phi'_\C=\rho_{2n}\circ \phi_\C$. A similar proof as in part (1) shows that $\psi'_\C$ is $\U(n)$-bi-Lipschitz. The image of $\phi'_\C$ is contained in $\rho_{2n}(H^2(\C^\ell))$
which has dimension $4n(\ell-n)$ by Lemma~\ref{lem:dim_rho} by Lemma~\ref{lem:dim_rho}.

(4) Consider the composition $\psi'_\C=\rho_{2n}\circ \psi_\C$, where $\psi:\C^{n\times \ell}\to \C^{(\ell-1)\times (\ell-1)}$. A similar proof as in part (1) shows that $\psi'_\C$ is $\F(n)$-bi-Lipschitz. The image of $\phi'_\C$ is contained in $\rho_{2n}(H^2(\C^{\ell-1}))$
which has dimension $4n(\ell-n-1)$ by Lemma~\ref{lem:dim_rho} (with $\ell$ replaced by $\ell-1$).
\end{proof}

\section{Euclidean triangles}

We consider the action of the euclidean group $\E(2)$ on the space $\R^{2\times 3}$ consisting of triples of points.  Define $\gamma:(\R^2)^3\to \R^3$ by
$$
\gamma(\begin{bmatrix} a_1 & a_2 & a_3\end{bmatrix})=(d(a_2,a_3),d(a_3,a_1),d(a_1,a_2)).
$$
From the triangle inquality, we see that the image of $\gamma$ is the set of all $(x_1,x_2,x_3)\in \R^3$ for which $x_1,x_2,x_3,x_1+x_2-x_3,x_2+x_3-x_1,x_3+x_1-x_2$ are nonnegative.
This is a cone spanned by the rays
through the vectors $(0,1,1)$, $(1,0,1)$, $(1,1,0)$. A triangle is, up to isometry,  determined by the lengths of the edges, so $\gamma$ is orbit separating. For $A=[a_1\ a_2\ a_3]$ and $B=[b_1\ b_2\ b_3]$ we have
\begin{multline}\label{eq:dgamma}
d(\gamma(A),\gamma(B))^2=
\big|\|a_1-a_2\|-\|b_1-b_2\|\big|^2+
\big|\|a_2-a_3\|-\|b_2-b_3\|\big|^2
+\big|\|a_3-a_1\|-\|b_3-b_1\|\big|^2\leq \\
\|(a_1-b_1)-(a_2-b_2)\|^2+
\|(a_2-b_2)-(a_3-b_3)\|^2+
\|(a_3-b_3)-(a_1-b_1)\|^2=\\
3\big(\|a_1-b_1\|^2+\|a_2-b_2\|^2+
\|a_3-b_2\|^2\big)-\|(a_1-b_1)+(a_2-b_2)+(a_3-b_3)\|^2\leq 3d(A,B)^2.
\end{multline}
If we choose $g\in \E(3)$ such that
$d(A,g\cdot B)=d_{\E(2)}(A,B)$
and replace $B$ by $g\cdot B$ in
(\ref{eq:dgamma}), we get
$$d(\gamma(A),\gamma(B))\leq \sqrt{3}d_{\E(2)}(A,B).
$$
However, $\gamma$ is not $\E(2)$-bi-Lipschitz as we will see.
Suppose that there exists a constant $C_1$ with
$$
C_1d_{\E(2)}(A,B)\leq d(\gamma(A),\gamma(B))
$$
for all $A,B\in \R^{2\times 3}$.
For example, let
$$
A=\begin{bmatrix}
    1 & 0 & -1\\
    0 & 0 & 0
\end{bmatrix}
\mbox{ and }
B=\begin{bmatrix}
    1 & 0 & -1\\
    -\varepsilon & 2\varepsilon & -\varepsilon
    \end{bmatrix}
$$
Let $\pi:\R^{n\times 3}\to \R^{n\times 3}$ as in Section~\ref{sec:2.3}. Then we have $\pi(A)=A$ and $\psi(B)=B$. The matrix
$$
AB^\tr=\begin{bmatrix}
    2 & 0 \\
    0 & 0
\end{bmatrix}
$$
is positive semi-definite and by Corollary~\ref{cor:dOisdU}
(with $W=I$) we get $d_{\OO(2)}(A,B)=d(A,B)$.
By Lemma~\ref{lem:10}, 
$$d_{\E(2)}(A,B)=d_{\OO(2)}(\pi(A),\pi(B))=d_{\OO(2)}(A,B)=d(A,B)=\sqrt{6}\varepsilon.
$$
On the other hand,
$$
d(\gamma(A),\gamma(B))=d((1,2,1),(\sqrt{1+9\varepsilon^2},2,\sqrt{1+9\varepsilon^2}))
$$
If $\varepsilon$ is small, then $\sqrt{1+9\varepsilon^2}=1+\frac{9}{2}\varepsilon^2+O(\varepsilon^4)$.
We have
$$
C_1\sqrt{6}\varepsilon\leq C_1 d_{\E(2)}(A,B)\leq
d(\gamma(A),\gamma(B))=\frac{9\sqrt{2}}{2}\varepsilon^2+O(\varepsilon^4).
$$
This gives a contradiction as $\varepsilon\downarrow 0$. 

In Theorem~\ref{theo:4} we constructed a map $\psi:\R^{2\times 3}\to \R^{3\times 3}$ with the property
$$
d_{\E(2)}(A,B)\leq d(\psi(A),\psi(B))\leq \sqrt{2}d_{\E(2)}(A,B).
$$
Consider the orthogonal matrix
$$
U=\begin{bmatrix}
    \frac{-1}{\sqrt{2}} & \frac{-1}{\sqrt{6}}& \frac{1}{\sqrt{3}}\\
    \frac{1}{\sqrt{2}}  & \frac{-1}{\sqrt{6}} & \frac{1}{\sqrt{3}}\\
    0 & \frac{2}{\sqrt{6}} & \frac{1}{\sqrt{3}}
\end{bmatrix}.
$$
$$
U=\begin{bmatrix} -\frac{1}{\sqrt{2}} & \frac{1}{\sqrt{2}} & 0\\
\frac{-1}{\sqrt{6}} & \frac{-1}{\sqrt{6}} & \frac{2}{\sqrt{6}}\end{bmatrix}
$$
Then we have
$$
\begin{bmatrix}
    a_1-\overline{a} & a_2-\overline{a} & a_3-\overline{a}
\end{bmatrix}=\widetilde{A}U^\tr,\mbox{ where }
\widetilde{A}=
\begin{bmatrix} 
\displaystyle \frac{a_2-a_1}{\sqrt{2}} & \displaystyle \frac{2a_3-a_1-a_2}{\sqrt{6}} \end{bmatrix} 
$$
Define $\Psi_1(A),\Psi_2(A),\Psi_3(A)$ by
$$
\begin{bmatrix}
    \Psi_1(A) & \frac{1}{\sqrt{2}}\Psi_3(A)\\
    \frac{1}{\sqrt{2}}\Psi_3(A) & \Psi_2(A)
\end{bmatrix}=(\widetilde{A}^\tr \widetilde{A})^{1/2}=
\begin{bmatrix}
\displaystyle \frac{\|a_2-a_1\|^2}{2} &
\displaystyle \frac{\langle a_2-a_1,2a_3-a_1-a_2\rangle}{2\sqrt{3}}  \\
\displaystyle \frac{\langle a_2-a_1,2a_3-a_1-a_2\rangle}{2\sqrt{3}} &
\displaystyle \frac{\|2a_3-a_1-a_2\|^2}{6}
\end{bmatrix}^{1/2}.
$$
It follows that
\begin{multline*}
\psi(A)=\phi(\widetilde{A}U^t)=(U\widetilde{A}^\tr \widetilde{A}U^\tr)^{1/2}=U(\widetilde{A}^\tr)^{1/2}\widetilde{A}U^\tr=U\begin{bmatrix}
    \Psi_1(A) & \frac{1}{\sqrt{2}}\Psi_3(A)\\
    \frac{1}{\sqrt{2}}\Psi_3(A) & \Psi_2(A)
\end{bmatrix}U^\tr=\\
=\frac{\Psi_1(A)-\Psi_2(A)}{\sqrt{2}}
E_1+\Psi_3(A)E_2+\frac{\Psi_1(A)+\Psi_2(A)}{\sqrt{2}}E_3,
\end{multline*}
where
$$
E_1=U\begin{bmatrix}
    \frac{1}{\sqrt{2}} & 0 \\
    0 & \frac{-1}{\sqrt{2}}
    \end{bmatrix}U^t,
    E_2=U\begin{bmatrix}
     0 & \frac{1}{\sqrt{2}} \\
    \frac{1}{\sqrt{2}} & 0
\end{bmatrix}U^t,
E_e=U\begin{bmatrix}
    \frac{1}{\sqrt{2}} & 0 \\
    0 & \frac{1}{\sqrt{2}}
\end{bmatrix}U^t,
$$
The matrices $E_1,E_2,E_3$ are orthogonal to each other and have euclidean length 1. If we set $$\Psi=\left(\frac{\Psi_1-\Psi_2}{\sqrt{2}},\Psi_3,\frac{\Psi_1+\Psi_3}{\sqrt{2}}\right),
$$
then $d(\Psi(A),\Psi(B))=d(\psi(A),\psi(B))$, for all $A,B\in \R^{2\times 3}$, so we may as well use $\Psi:\R^{2\times 3}\to \R^3$ instead of $\psi:\R^{2\times 3}\to \R^{3\times 3}$.

The matrix
$$
\begin{bmatrix}
    \Psi_1(A) & \frac{1}{\sqrt{2}}\Psi_3(A)\\
    \frac{1}{\sqrt{2}}\Psi_3(A) & \Psi_2(A)
\end{bmatrix}
$$
is is positive semidefinite, so the determinant $\Psi_1(A)\Psi_2(A)-\frac{1}{2}\Psi_3(A)^2$ and the trace $\Psi_1(A)+\Psi_2(A)$ are nonnegative. This implies that
the image of $\Psi$ lies in the cone in $\R^3$ given by inequalities $z\geq 0$ and $x^2+y^2\leq z^2$. In fact the image of of $\Psi$ is equal to this cone.

As an experiment we take $1$ million pairs of triangles in $\R^2$ where the coordinates of the vertices have a standard normal distribution. The distribution of the ratio $d(\gamma(A),\gamma(B))/d_{\E(2)}(A,B)$ is on the left in Figure~\ref{fig:1}.
The ratio ranged from $0.2268$
to $1.7321$. Theoretically
this ratio must always lie in the interval $[0,\sqrt{3}]$ theoretically. The mean of the distribution was $1.4043$ with a standard deviation of $0.2128$.
The natural logarithm of the ratio
had mean $0.3273$ and standard defiation $0.1601$.

The distribution of the ratio
$d(\Psi(A),\Psi(B))/d_{\E(2)}(A,B)$ is in Figure~\ref{fig:1} on the right. The ratio ranged from $1.0381$ to $1.4039$. Theoretically this ratio is always in the interval $[1,\sqrt{2}]$.
The mean of this distribution is
1.0381 and the standard deviation is $0.0510$. The natural logarithm of this ratio has mean $0.0362$ and
and standard deviation $0.469$.

\begin{figure}[h]\label{fig:1}
\caption{Distortion distribution for $\gamma$ (left) and $\Psi$ (right)}
\includegraphics[width=3in]{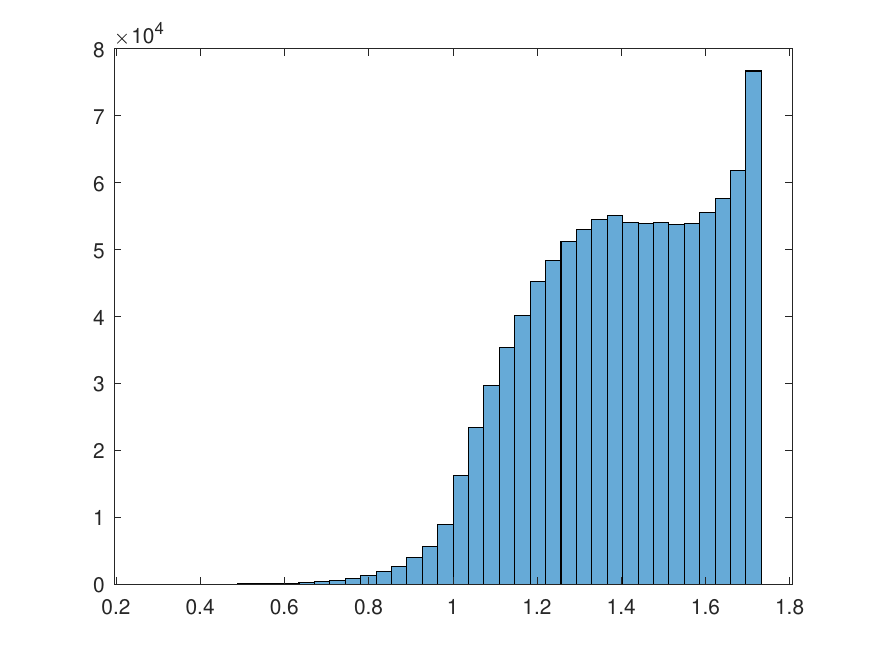}
\includegraphics[width=3in]{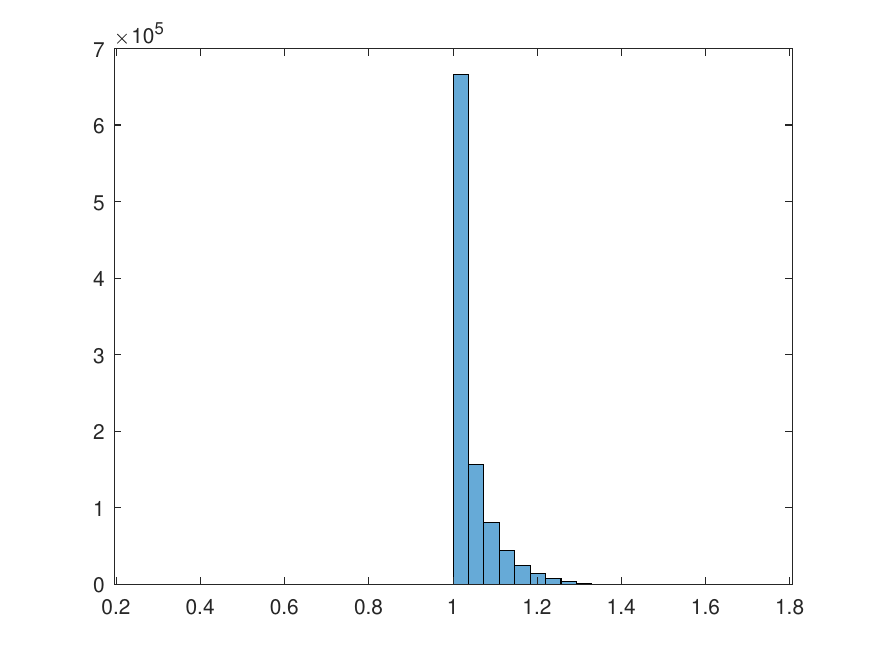}
\end{figure}

For a second experiment, we create a database of 5000 random triangles in $\R^2$. Again each coordinate of each vertex has a standard normal destribution. 
For each triangle $A$ in the database, we create another triangle $A_{\rm noisy}=A+E$
where $E$ is additive noise where each coordinate of each vertex has standard deviation $\varepsilon$.
Given $A_{\rm noisy}$ we verify if we can correctly classify the triangle by looking it up in the database. First, we choose a point $A'$ in the database with $d_{\E(2)}(A_{\rm noisy},A')$ minimal. In Figure~\ref{fig:2} we look at the ratio of points that are misclassified (i.e., $A'\neq A$). The blue graph plots error ratio (vertical axis) against $\varepsilon$ in the range $[0,0.03]$ (horizontal axis). 
Next we plot $\gamma$ to classify the points. We choose a point $A''$ such that $d(\gamma(A),\gamma(A'')$ is minimal. The red graph plots the ratio of misclassified points using $\gamma$ versus $\varepsilon$. We also choose a point $A'''$ such that
$d(\Psi(A),\Psi(A'''))$ is minimal and in the yellow graph we plot the ratio of misclassified points using $\Psi$ versus $\varepsilon$.
The graphs shows that using the map $\Psi$ to classify triangles performs as well as using the actual distance $d_{\E(n)}$.
If we use the map $\gamma$ to classify triangles, the performance is notably worse.

\begin{figure}[h]\label{fig:2}
\caption{Misclassification errors using $d_{\E(2)}$ (blue), $\gamma$ (red) and $\Psi$ (yellow)}
\includegraphics[width=4 in]{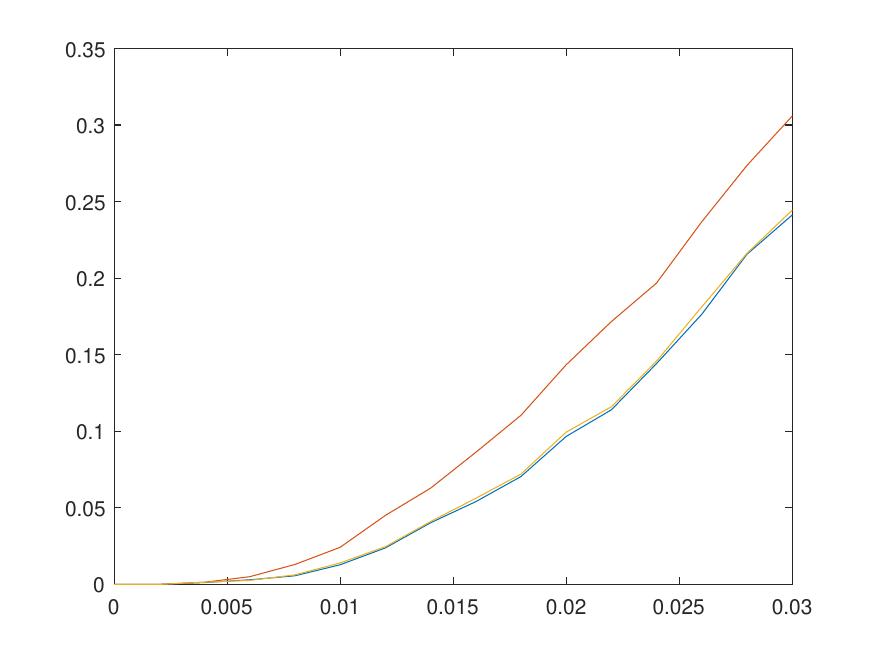}

\end{figure}

% References
\begin{bibdiv}
\begin{biblist}
\bib{AMNSW1998}{article}{
  title={An optimal algorithm for approximate nearest neighbor searching fixed dimensions},
  author={Arya, Sunil},
  author={Mount, David M.},
  author={Netanyahu, Nathan S.},
  author={Silverman, Ruth},
  author={Wu, Angela Y},
  journal={Journal of the ACM (JACM)},
  volume={45},
  number={6},
  pages={891--923},
  year={1998},
  publisher={ACM New York, NY, USA}
}

\bib{BT2023a}{article}{
  title={G-invariant representations using coorbits: Injectivity properties},
  author={Balan, Radu},
  author={Tsoukanis, Efstratios},
  eprint={arXiv:2310.16365},
  year={2023}
}

\bib{BT2023b}{article}{
  title={G-invariant representations using coorbits: Bi-lipschitz properties},
  author={Balan, Radu},
  author={Tsoukanis, Efstratios},
  eprint={arXiv:2308.11784},
  year={2023}
}

\bib{CCC2020}{article}{
  title={Complete set of translation invariant measurements with Lipschitz bounds},
  author={Cahill, Jameson},
  author={Contreras, Andres}, 
  author={Contreras-Hip, Andres},
  journal={Applied and Computational Harmonic Analysis},
  volume={49},
  number={2},
  pages={521--539},
  year={2020},
  publisher={Elsevier}
}

\bib{CCC2024}{article}{
  title={Stable Separation of Orbits for Finite Abelian Group Actions},
  author={Cahill, Jameson},
  author={Contreras, Andres},
  author={Contreras-Hip, Andres},
  journal={Journal of Fourier Analysis and Applications},
  volume={30},
  number={1},
  pages={12},
  year={2024},
  publisher={Springer}
}

\bib{CIM2023}{article}{
  title={Towards a bilipschitz invariant theory},
  author={Cahill, Jameson},
  author={Iverson, Joseph W.},
  author={Mixon, Dustin G.},
  journal={Appl. Comput. Harmon. Anal.},
  status={to appear},
  eprint={arXiv:2305.17241},
  year={2023}
}

\bib{CIMP2022}{article}{
  title={Group-invariant max filtering},
  author={Cahill, Jameson},
  author={Iverson, Joseph W.},
  author={Mixon, Dustin G.},
  author={Packer, Daniel},
  journal={Found. Comput. Math.},
  status={to appear},
  eprint={arXiv:2205.14039},
  year={2022}
}

\bib{CD2022}{article}{
  title={Invariant Theory as a Tool for Spacecraft Navigation},
  author={Christian, John A.},
  author={Derksen, Harm},
  booktitle={AAS/AIAA Astrodynamics Specialist Conference},
  year={2022}
}

\bib{GW}{book}{
  title={Representations and invariants of the classical groups},
  author={Goodman, Roe},
  author={Wallach, Nolan R},
  year={2000},
  publisher={Cambridge University Press}
}

\bib{Hilbert1890}{article}{
  title={\"Uber die Theorie der algebraischen Formen},
  author={Hilbert, David},
  journal={Mathematische annalen},
  volume={36},
  number={4},
  pages={473--534},
  year={1890},
  publisher={Springer-Verlag Berlin/Heidelberg}
}

\bib{Hilbert1893}{article}{
  title={{\"U}ber die vollen Invariantensysteme},
  author={Hilbert, David},
  journal={Mathem. Annalen},
  volume={42},
  pages={313--373},
  year={1893}
}

% \bib{IM1998}{article}{
%   title={Approximate nearest neighbors: towards removing the curse of dimensionality},
%   author={Indyk, Piotr},
%   author={Motwani, Rajeev},
%   booktitle={Proceedings of the thirtieth annual ACM symposium on Theory of computing},
%   pages={604--613},
%   year={1998}
% }

% \bib{KOR1998}{article}{
%   title={Efficient search for approximate nearest neighbor in high dimensional spaces},
%   author={Kushilevitz, Eyal},
%   author={Ostrovsky, Rafail},
%   author={Rabani, Yuval},
%   booktitle={Proceedings of the thirtieth annual ACM symposium on Theory of computing},
%   pages={614--623},
%   year={1998}
% }

\bib{LW1977}{article}{
  title={Worst-case analysis for region and partial region searches in multidimensional binary search trees and balanced quad trees},
  author={Lee, Der-Tsai},
  author={Wong, Chak-Kuen},
  journal={Acta Informatica},
  volume={9},
  number={1},
  pages={23--29},
  year={1977},
  publisher={Springer}
}

\bib{MQ2024}{article}{
  title={Stable Coorbit Embeddings of Orbifold Quotients},
  author={ Mixon, Dustin G},
  author={Qaddura, Yousef},
  eprint={arXiv:2403.14042},
  year={2024}
}

\bib{Schonemann1966}{article}{
  title={A generalized solution of the orthogonal procrustes problem},
  author={Sch{\"o}nemann, Peter H},
  journal={Psychometrika},
  volume={31},
  number={1},
  pages={1--10},
  year={1966},
  publisher={Springer}
}

\bib{Schwarz2004}{misc}{
  title={Group actions and quotients for compact Lie groups and algebraic groups},
  editor={Campbell, H. E. A. Eddy},
  editor={Wehlau, David L.},
  author={Schwarz, Gerald},
  booktitle={Invariant theory in all characteristics},
  year={2004},
  publisher={American Mathematical Society}
}

\end{biblist}
\end{bibdiv}

\end{document}